\documentclass{amsart}

\usepackage{amsmath, amssymb, amsthm, amsfonts,bm}

\input xy
\xyoption{all}

\usepackage{hyperref}

\swapnumbers
\numberwithin{equation}{section}

\theoremstyle{plain}
\newtheorem{theorem}[subsection]{Theorem}
\newtheorem{lemma}[subsection]{Lemma}
\newtheorem{prop}[subsection]{Proposition}
\newtheorem{cor}[subsection]{Corollary}

\theoremstyle{definition}
\newtheorem{defn}[subsection]{Definition}

\newtheorem{remark}[subsection]{Remark}
\newtheorem{exam}[subsection]{Example}

\setcounter{tocdepth}{1}

%%%%% page setup %%%%%

\setlength{\textwidth}{460pt}
\setlength{\oddsidemargin}{0pt}
\setlength{\evensidemargin}{0pt}
\setlength{\topmargin}{0pt}
\setlength{\textheight}{620pt}

%%%%%%%%% mathbb %%%%%%%%

\def\CC{\mathbb{C}}

\def\FF{\mathbb{F}}

\def\NN{\mathbb{N}}

\def\RR{\mathbb{R}}

\def\ZZ{\mathbb{Z}}

%%%%%%%% mathcal %%%%%%%%

%%%%%%%%% bold face %%%%%%%%%%

\def\bR{\mathbf{R}}

%%%%%%%% frak %%%%%%%%

\newcommand\frg{\mathfrak{g}}
\newcommand\frh{\mathfrak{h}}

%%%%%%%% tilde %%%%%%%%%

%\newcommand\till{\widetilde{l}}

%%%%%%%% check %%%%%%%%

%%%%%%%% standard operators %%%%%%%%

\newcommand{\ch}{\textup{char}}

\newcommand\id{\textup{id}}

\newcommand\Lie{\textup{Lie}\ }

\newcommand{\Nm}{\textup{Nm}}

\newcommand{\opp}{\textup{opp}}

\newcommand\Spec{\textup{Spec}\ }

\newcommand\Sym{\textup{Sym}}

\newcommand{\Trd}{\textup{Trd}}

\newcommand{\val}{\textup{val}}

\newcommand\Aut{\textup{Aut}}
\newcommand\Hom{\textup{Hom}}
\newcommand\End{\textup{End}}

%%%%%%%% Lie groups and algebras %%%%%%%
\newcommand\GL{\textup{\textbf{GL}}}

\newcommand{\ad}{\textup{ad}}
\newcommand{\Ad}{\textup{Ad}}

%%%%%%% common symbols %%%%%%%%

\newcommand{\isom}{\stackrel{\sim}{\to}}

\renewcommand{\j}[1]{\langle{#1}\rangle}
\newcommand{\jj}[1]{\langle\langle{#1}\rangle\rangle}

\newcommand\quash[1]{}
  % 2-by-2 matrix
\newcommand\un{\underline}

\newcommand{\ov}{\overline}

\newcommand{\lr}[1]{(\!(#1)\!)}
\newcommand\sss{\subsubsection}

\newcommand\ot{\otimes}

   % sheaf
  % plain group
    % sheaf
     % plain group
     % compact support
  % Borel-Moore

%%%%% Greek %%%%%

\renewcommand\a\alpha
\renewcommand\b\beta
\newcommand\g\gamma
\renewcommand\d\delta
\newcommand\D\Delta
\newcommand{\e}{\epsilon}
\renewcommand{\th}{\theta}
\newcommand{\ph}{\varphi}

\newcommand{\s}{\sigma}
\renewcommand{\t}{\tau}

\newcommand{\y}{\eta}
\newcommand{\z}{\zeta}
\newcommand{\ep}{\epsilon}

%%%%%%% superscripts %%%%%%%%
\newcommand\dm{\diamondsuit}

%%%%% bar %%%%%%%%

%%%%%% Galois groups %%%%%%%%

%%%%%%% Spaces %%%%%%%%%%

\newcommand{\bb}{\mathbf{b}}
\newcommand{\bth}{\bm{\theta}}
\renewcommand{\c}{\circ}
\newcommand\cc{\cdot,\cdot}
\newcommand{\bAut}{\textup{\textbf{Aut}}}

%opening

\title{Semilinear automorphisms of classical groups and quivers}

\dedicatory{To Professor Lo Yang's 80th anniversary, with admiration}
\thanks{Z.Y. is partially supported by the Packard Foundation.}
\author{Jinwei Yang}
\address{(J.Yang) University of Alberta, 632 Central Academic Building, Edmonton, AB, Canada T6G 2G1.}
\email{jinwei2@ualberta.ca}
\author{Zhiwei Yun}
\address{(Z.Yun) Massachusetts Institute of Technology, 77 Massachusetts Ave, Cambridge, MA 02139}\email{zyun@mit.edu}
\date{}
\keywords{Classical groups, quiver}

\begin{document}

\begin{abstract} For a classical group $G$ over a field $F$ together with a finite-order automorphism $\th$ that acts compatibly on $F$, we describe the fixed point subgroup of $\th$ on $G$ and the eigenspaces of $\th$ on the Lie algebra $\frg$ in terms of cyclic quivers with involution. More precise classification is given when $\frg$ is a loop Lie algebra, i.e., when $F=\CC\lr{t}$.
\end{abstract}

\maketitle
\tableofcontents
\section{Introduction}

\subsection{Cyclically graded Lie algebras} Let $\frg$ be a Lie algebra of a connected simple algebraic group $G$ over a  field $k$. A {\em cyclic grading} on $\frg$ is a decomposition
\begin{equation*}
\frg = \bigoplus_{i \in \ZZ/m\ZZ}\frg_i,
\end{equation*}
where $m \in \NN$ and $[\frg_i, \frg_j] \subset \frg_{i+j}$ for all $i, j\in\ZZ/m\ZZ$. The summand $\frg_0$ is a Lie subalgebra; let $G_0$ denote the corresponding connected subgroup of $G$.  When $m$ is prime to $\ch(k)$ and $k$ contains all $m$-th roots of unity, such a cyclic grading corresponds to an automorphism $\th$ of $\frg$ of order divisible by $m$, under which $\frg_{i}$ is the eigenspace of $\th$ with eigenvalue $\z^{i}$ (where $\z$ is a fixed primitive $m$-th root of unity).

The invariant theory of the action of $G_{0}$ on $\frg_{i}$ has been much studied by Vinberg and his school. The $G_{0}$ action on $\frg_{i}$ share many nice properties of the adjoint action of $G$ on $\frg$. In \cite{RLYG}, the authors single out {\em stable gradings} (when $\frg_{i}$ has stable vectors under $G_{0}$)  and connect them with regular elliptic elements in the Weyl group of $G$. When $\frg$ is a classical Lie algebra, the subgroup $G_{0}$ as well as its action on $\frg_{i}$ can be described in terms of cyclic quivers with involution, see \cite[\S6-8]{Y}.

From the Lie-theoretic perspective it is natural to consider cyclic gradings on Kac-Moody algebras, starting from the loop Lie algebras. In the case of a loop Lie algebra $\frg\ot k\lr{t}$, it is interesting to consider not only those cyclic gradings coming from $k\lr{t}$-linear automorphisms of $\frg$, but also those coming from $k\lr{t}$-semilinear automorphisms. For example, let $\z$ be a root of unity in $k$, we may consider a finite order automorphism $\th$ of $\frg\ot k\lr{t}$ such that $\th(X\ot a(t))=\th(X)\ot a(\z t)$ for all $a(t)\in k\lr{t}$ and $X\in \frg$. In this paper we generalize the quiver description in \cite{Y} for cyclic gradings on classical Lie algebras to a setting that includes finite-order semilinear automorphisms of loop Lie algebras of classical type.

\subsection{Convention}\label{ss:con} For a field $k$ and $n\in \NN$, $\mu_{n}(k)$ denotes the group of $n$-th roots of unity in $k^{\times}$.

Let $A$ be an associative ring, and  $M, M'$ two left $A$-modules. Let $\z$  be an automorphism of $A$. A map $f:M\to M'$ is called {\em $(A,\z)$-semilinear} if
\begin{equation*}
f(av)=\z(a)f(v), \quad \forall a\in A, v\in M.
\end{equation*}

For a left $A$-module $M$, let $\End_{A}(M)$ denote the set of $A$-linear endomorphisms of $M$ and $\Aut_{A}(M)$ denote the group of $A$-linear automorphisms of $M$.

If $A$ contains a field $k$, and $M$ is a left $A$-module $M$ of finite dimension over $k$, let $\GL_{A/k}(M)$ be the algebraic group over $k$ whose $R$-points (for any commutative $k$-algebra $R$) are $R\ot_{k}A$-linear automorphisms of $R\ot_{k}M$. When $A=k$ we write $\GL_{k}(M)$ for $\GL_{k/k}(M)$, which is the usual general linear group. If $A$ is commutative, then $\GL_{A/k}(M)=\bR_{A/k}\GL_{A}(M)$ is the Weil restriction of the general linear group $\GL_{A}(M)$ from $A$ to $k$.

If, moreover, the $A$-module $M$ carries a $k$-bilinear pairing $\j{\cc}: M\times M\to B$ valued in some $k$-vector space $B$, we denote by $\bAut_{A/k}(M,\j{\cc})$ the algebraic subgroup  of $\GL_{A/k}(M)$ preserving the pairing.

\subsection{The setup and the main result}\label{ss:setup}
Throughout the paper, let $k$ be a field. Let $F$ be a finite separable $k$-algebra together with an automorphism $\z\in \Aut(F)$ of order $n\in\NN$ such that $k=F^{\z}$. We allow $F$ to be a product of fields.

Let $V$ be a finite type $F$-module. Let $\th$ be an $(F,\z)$-semilinear automorphism of $V$. Let $m$ be a multiple of $n$ such that $m/n$ is invertible in $k$. Assume
\begin{equation*}
\th^{m}=\b\cdot\id _{V}, \quad\textup{ for some } \b\in k^{\times}.
\end{equation*}

%Assume $\mu_{n}:=\mu_{n}(k)$ has order $n$, i.e., $k$ contains $n$ distinct $n$-th roots of unity (in particular $n$ is invertible in $k$). Fix a primitive element $\z\in \mu_{n}$. Let $F=k[t]/(t^{n}-\t)$ be a cyclic extension of $k$ of degree $n$, where $\t\in k^{\times}$. We identify  $\Gal(F/k)$ with $\mu_{n}$, so that for $\z'\in \mu_{n}$, the corresponding element $\z'_{\Gal}\in \Gal(F/k)$ acts on $F$ by $t\mapsto \z' t$.

Then $\th$ acts on the Weil restriction $\GL_{F/k}(V)$ and on the Lie algebra $\End_{F}(V)$ by conjugation. As a warm-up, in \S\ref{s:linear} we describe the fixed point subgroup of $\th$ on $\GL_{F/k}(V)$ and the $\th$-eigenspaces  on $\End_{F}(V)$ in terms of cyclic quivers decorated by division algebras.  The more complicated case where  $\GL_{F/k}(V)$ is replaced with a classical group  $G$ defined using $V$ and a symmetric bilinear form, a symplectic form or a Hermitian form on it is considered in \S\ref{s:pol}.

Our main result is Theorem \ref{th:main}, which gives a complete description of the fixed subgroup $H$ of $\th$ on $G$ and eigenspaces $\frg(\xi)$ of $\th$ on $\frg=\Lie G$ in terms of cyclic quivers with involution decorated by division algebras and pairings. The strategy of the proof is to realize $V$ as a module over a certain semisimple (non-commutative) algebra $A_{\b}$, and to extract linear-algebraic data from the multiplicity spaces of  simple $A_{\b}$-modules in $V$.

In \S\ref{s:loop} we specialize to the case of classical loop Lie algebras, and make the description in Theorem \ref{th:main} more precise. The result in this case can be summarized in the following rough form: when $G$ comes from a polarization on $V$ and $\Nm_{F/k}(\xi)$ is a primitive $m/n$-th root of unity, we can associate to the situation a  cyclic quiver $Q_{\xi}$ with $m/n$ or $m/2n$  vertices (i.e., there is a vector space $M_{i}$ on each vertex $i$ of $Q_{\xi}$ over $k$ or a quadratic extension of $k$). The quiver $Q_{\xi}$ is equipped with an involution $(-)^{\dm}$, and the vector spaces on $i$ and $i^{\dm}$ are dual to each other. For $i=i^{\dm}$, $M_{i}$ is equipped with a symmetric bilinear, skew-symmetric bilinear  or Hermitian form. Then $H=G^{\Ad(\th)}$ is the automorphism group of the $(M_{i})_{i\in I}$ preserving the pairings and forms; $\frg(\xi)$ is the space of representations of the quiver $Q_{\xi}$ in the vector spaces $(M_{i})$ satisfying a certain self-adjointness conditions with respect to the pairings.

\subsection{Examples of the setup}
\begin{enumerate}
\item $n=1$ so $k=F$. In this case $V$ is a finite-dimensional $k$-vector space with a $k$-linear operator $\th$ such that $\th^{m}$  is a scalar.
\item $m=n$. In this case $\th$ gives a descent datum of $V$ to a $k$-vector space $V'$, i.e., $V=V'\ot_{k} F$.
\item $k=\RR$, $F=\CC$ and $n=2$. In this case $V$ is a complex vector space with a complex {\em anti-linear} automorphism $\th$ of finite order.
\item $k$ is a discrete valuation field and $F$ is a tamely ramified Galois extension of $k$ of degree $n$. This includes the case $k=\CC\lr{t^{n}}$ and $F=\CC\lr{t}$ with the action $\z( a(t))=a(\z_{n}t)$ for some primitive $n$-th root of unity $\z_{n}$, which arises from the loop Lie algebra setting discussed in the beginning.
\item Let $k$ be a field containing a finite field $\FF_{q}$, and $F=k\ot_{\FF_{q}}\FF_{q^{n}}$ with the action of $\z$ by $q$-Frobenius on the $\FF_{q^{n}}$-factor.  An $F$-vector space $V$ with an $(F,\z)$-semilinear automorphism $\th$ appears in the study of the generic fiber of Shtukas by Drinfeld \cite[\S2]{D} (which Drinfeld calls ``F-spaces'').
\end{enumerate}

\section{Linear case}\label{s:linear}

\subsection{The problem}\label{ss:linear setup}

%Let $m$ be a multiple of $n$ which is prime to $\textup{char}(k)$.
%
%Let $V$ be a finite-dimensional vector space over a field $F$. Let $\th$ be an automorphism of $V$ such that
%\begin{equation*}
%\th(av)=\z(a)\th(v), \quad \forall a\in F, v\in V.
%\end{equation*}
%and such that
%\begin{equation*}
%\th^{m}=\b\cdot\id _{V}, \quad \b\in k^{\times}.
%\end{equation*}
We are in the setup of \S\ref{ss:setup}. Let $G=\GL_{F}(V)$, the general linear group over $F$.  Let $\frg=\End_{F}(V)$ be the Lie algebra of $G$. Let $\Ad(\th)$ (resp. $\ad(\th)$) denote the conjugation action of $\th$ on $G$ (resp. $\frg$): $\Ad(\th)(g)=\th g\th^{-1}$ for $g\in G$ (resp. $\ad(\th)(\ph)=\th\ph\th^{-1}$ for $\ph\in \frg$). Then $\Ad(\th)$ is an automorphism of the Weil restriction $\bR_{F/k}G=\GL_{F/k}(V)$, and $\ad(\th)$ is an $(F,\z)$-semilinear automorphism of $\frg$.  Our goal is to understand the following in terms of quivers:
\begin{enumerate}
\item The fixed point group $H:=(\bR_{F/k}G)^{\Ad(\th)}$ as an algebraic group over $k$. Note that $H(k)=\{g\in \Aut_{F}(V)|g\th=\th g\}$.
\item For $\xi\in F^{\times}$,  the $H$-module
\begin{equation*}
\frg(\xi):=\{\ph\in \End_{F}(V)| \th\ph\th^{-1}=\xi \ph\}.
\end{equation*}
with the action of $h\in H$ by $h: \ph\mapsto h\ph h^{-1}$.
\end{enumerate}

Since $\ad(\th)$ is not $F$-linear, $\frg(\xi)$ is not an eigenspace of $\ad(\th)$ in the traditional sense. In particular, for different $\xi$, the subspaces $\frg(\xi)$ are not necessarily linearly independent.

\begin{defn}\label{def:A}
\begin{enumerate}
\item Let $F\j{\bth}$ be the non-commutative polynomial ring over $F$ in one variable $\bth$ with the relation $\bth a=\z(a)\bth$ for all $a\in F$.
\item Let $A_{\b}$ be the quotient of $F\j{\bth}$ by the ideal generated by the central element $\bth^{m}-\b$.
\end{enumerate}
\end{defn}

By construction, $A_{\b}$ is an associative $F$-algebra. An $A_{\b}$-module  is an $F$-vector space $U$ together with an $(F,\z)$-semilinear automorphism $T: U\to U$ satisfying $T^{m}=\b\cdot\id_{U}$. In particular, $V$ is an $A_{\b}$-module with  $\bth$ acting by $\th$.

%The isomorphism type of $A_{\b}$ depends only on  the image of $\b$ in $k^{\times}/\Nm_{F/k}(F^{\times})^{m/n}$.  In fact, if $\b'=\Nm_{F/k}(\l)^{m/n}\b$ for some $\l\in F^{\times}$, then we have an $F$-linear isomorphism $\mu_{\l}: A_{\b}\isom A_{\b'}$ sending  $\bth$ to $\l\bth$.

\subsection{Twisting by $\xi$}\label{ss:tw xi}
Let
\begin{equation*}
\Xi_{m/n}=\{\xi\in F^{\times} | \Nm_{F/k}(\xi)\in \mu_{m/n}(k)\}.
\end{equation*}
For $\xi\in \Xi_{m/n}$, let $\mu_{\xi}$ be the  $F$-linear automorphism of $A_{\b}$ sending  $\bth$ to $\xi\bth$. This defines an action of $\Xi_{m/n}$ on $A_{\b}$. For an $A_{\b}$-module $V$, let $V^{\xi}$ be the same $F$-vector space $V$  with the action of $A_{\b}$ twisted by $\mu_{\xi}$; i.e., the new action of $\bth$ on $v\in V^{\xi}$ is $\bth\cdot  v=\xi\th(v)$.

%\begin{remark}\label{r:change beta}
%\end{remark}

%The Galois group $\Gal(F/k)\cong \mu_{n}$ acts on $A$ by changing $\bt$ and keeping $\bth$. For an  $A$-module, and $\y\in \Gal(F/k)\cong \mu_{n}$, let ${}^{\y}V=V$ be the $A$-module with the $F$-structure twisted by $\y$ and keeping the $\th$-action; i.e., the new action of $t$ is $t\bullet v=\y tv$, $\forall v\in V$.

\subsection{Reformulation of the problem}\label{ss:ref GL} We may rewrite $H$ and $\frg(\xi)$ in terms of the $A_{\b}$-module structure on $V$:
\begin{enumerate}
\item $H=\GL_{A_{\b}/k}(V)$ as an algebraic group over $k$ (see \S\ref{ss:con} for convention);
\item For $\xi\in \Xi_{m/n}$, we have $\frg(\xi)=\Hom_{A_{\b}}(V^{\xi}, V)$. Note that if $\xi\notin \Xi_{m/n}$, then $\frg(\xi)=0$.
\end{enumerate}

\subsection{Classification of $A_{\b}$-modules}\label{ss:L}
Let $L_{\b}=k[\bth^{n}]\subset A_{\b}$; this is the center of $A_{\b}$. Let $b=\bth^{n}\in L_{\b}$, then $L_{\b}\cong k[\bb]/(\bb^{m/n}-\b)$ (the image of $\bb$ in $L_{\b}$ is $b$) is a separable $k$-algebra (since $m/n$ is prime to $\ch(k)$ by assumption). Let
\begin{equation*}
L_{\b}=\prod_{i\in I}L_{i}
\end{equation*}
be the decomposition of $L_{\b}$ into a product of fields, with the index set $I$ in natural bijection with the underlying set of $\Spec L_{\b}$. Let $b_{i}\in L_{i}$ be the image of $b$. Then
\begin{equation*}
A_{\b}=\prod_{i\in I}A_{i}, \quad \textup{ with }A_{i}=(L_{i}\ot_{k} F\j{\bth})/(\bth^{n}-b_{i}).
\end{equation*}

\begin{lemma}
The algebra $A_{i}$ is a central simple algebra over $L_{i}$.
\end{lemma}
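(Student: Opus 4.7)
The plan is to verify that $A_{i}$ is central simple over $L_{i}$ by showing that base-changing to a separable closure $\overline{L}_{i}$ of $L_{i}$ turns $A_{i}$ into a full matrix algebra, from which central simplicity descends.

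The key preliminary input is that $F/k$ is in fact a $\ZZ/n\ZZ$-Galois étale $k$-algebra. This follows from the hypotheses (finite separable, $F^{\z}=k$, $|\z|=n$) by a short verification; a convenient consequence is that $\overline{L}_{i}\ot_{k}F\cong\overline{L}_{i}^{n}$ as $\overline{L}_{i}$-algebras, with $\z$ cyclically permuting the primitive idempotents $e_{0},\ldots,e_{n-1}$, say $\z(e_{j})=e_{j+1}$. (Transitivity of the $\z$-action on idempotents is forced by $F^{\z}=k$ being a single field, while freeness is automatic since $|\z|$ equals the number of idempotents.)

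Next choose an $n$-th root $\g\in\overline{L}_{i}$ of $b_{i}\in L_{i}^{\times}$, and set $S:=\g^{-1}\bth\in A_{i}\ot_{L_{i}}\overline{L}_{i}$. Then $S^{n}=1$ and $Se_{j}S^{-1}=e_{j+1}$, so the $e_{j}$'s and $S$ satisfy exactly the relations of the diagonal matrix units and the cyclic permutation matrix in $M_{n}(\overline{L}_{i})$, yielding an algebra homomorphism $M_{n}(\overline{L}_{i})\to A_{i}\ot_{L_{i}}\overline{L}_{i}$. The source is simple, so this map is injective; the dimension count
\begin{equation*}
\dim_{\overline{L}_{i}}\bigl(A_{i}\ot_{L_{i}}\overline{L}_{i}\bigr)=n\cdot[F:k]=n^{2}=\dim_{\overline{L}_{i}}M_{n}(\overline{L}_{i})
\end{equation*}
then shows it is an isomorphism. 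Hence $A_{i}\ot_{L_{i}}\overline{L}_{i}$ is central simple over $\overline{L}_{i}$, and by faithfully flat descent $A_{i}$ is central simple over $L_{i}$.

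The one delicate point is the identification of $\overline{L}_{i}\ot_{k}F$ with $\overline{L}_{i}^{n}$ together with the cyclic-shift $\z$-action when $F$ is not itself a field; everything else is a direct computation with the idempotents $e_{j}$ and the element $S$. An alternative formulation avoiding the passage to $\overline{L}_{i}$ is to recognize $A_{i}$ directly as the cyclic crossed-product algebra $(L_{i}\ot_{k}F,\langle\z\rangle,b_{i})$ attached to the Galois étale extension $L_{i}\ot_{k}F/L_{i}$ and the class of $b_{i}\in L_{i}^{\times}$; such cyclic crossed products are classically known to be central simple, and the matrix-algebra computation above is essentially the explicit splitting of this cyclic algebra over $\overline{L}_{i}$.
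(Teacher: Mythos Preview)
Your proof is correct and takes the same approach as the paper, which simply observes that $A_{i}$ has the standard presentation of a cyclic algebra of degree $n^{2}$ over $L_{i}$ and invokes the classical fact that such algebras are central simple. Your explicit splitting over $\overline{L}_{i}$ is precisely the standard proof of that classical fact, and you yourself note the cyclic crossed-product formulation at the end.
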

\begin{proof} The presentation of $A_{i}$ is the standard one for a cyclic algebra of degree $n^{2}$ over $L_{i}$. In particular  $A_{i}$ is a central simple algebra over $L_{i}$.
%Extend to an algebraic closure $\ov L_{i}$ of $L_{i}$, we may assume $\t=b_{i}=1$ by rescaling $t$ and $\th$. In this case, we explicit compute that $A_{i}\ot_{L_{i}}\ov L_{i}$ becomes a matrix algebra $M_{n}(\ov L_{i})$ (for example by observing that $A_{i}\ot_{L_{i}}\ov L_{i}$ is a direct summand of a Heisenberg group of which is a central extension of $(\ZZ/n\ZZ)^{2}$ by $\mu_{n}$).
\end{proof}

By the above lemma, for each $i\in I$, there is up to isomorphism a unique simple $A_{i}$-module. {\em We fix a simple $A_{i}$-module $S_{i}$ for each $i\in I$}. Let $D_{i}=\End_{A_{i}}(S_{i})^{\opp}$. Then $D_{i}$ is a central division algebra over $L_{i}$. Let $n_{i}=\dim_{D^{\opp}_{i}}(S_{i})$, then $A_{i}=\End_{D_{i}^{\opp}}(S_{i})\cong M_{n_{i}}(D_{i})$, and $\dim_{L_{i}}(D_{i})=(n/n_{i})^{2}$.
We view $S_{i}$ as a right $D_{i}$-module with the right $D_{i}$-action given by the left $D_{i}^{\opp}=\End_{A_{i}}(S_{i})$-action on $S_{i}$.

%Any $A_{i}$-module $V_{i}$  is of the form $V_{i}=S_{i}\ot_{D_{i}}M_{i}$, where $M_{i}$ is the left $D_{i}$-module $\Hom_{A_{i}}(S_{i}, V_{i})$ (using the action of $D^{\opp}_{i}$ on $S_{i}$).

\begin{cor} The algebra $A_{\b}$ is a semisimple $k$-algebra with the set of simple modules up to isomorphism given by $\{S_{i}\}_{i\in I}$. Any $A_{\b}$-module $V$ is canonically isomorphic to a direct sum
\begin{equation}\label{V decomp}
V\cong\bigoplus_{i\in I} S_{i}\ot_{D_{i}}M_{i}
\end{equation}
where $M_{i}=\Hom_{A_{i}}(S_{i}, V)$ viewed as a left $D_{i}$-module using the right $D_{i}$-action on $S_{i}$.
\end{cor}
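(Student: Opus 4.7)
The plan is to derive the corollary directly from the product decomposition $A_{\b}=\prod_{i\in I}A_{i}$ established above, combined with the previous lemma that each $A_{i}$ is central simple over $L_{i}$, and standard Morita/Artin--Wedderburn theory. Semisimplicity is immediate: a finite product of simple (in particular semisimple) rings is semisimple, and the simple modules of $\prod_i A_i$ are precisely the simple modules of each factor, viewed as $A_{\b}$-modules via the projection $A_{\b}\twoheadrightarrow A_{i}$. Since $A_{i}\cong M_{n_{i}}(D_{i})$ has (up to isomorphism) a unique simple module, namely $S_{i}$, the list $\{S_{i}\}_{i\in I}$ exhausts the simple $A_{\b}$-modules.

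For the decomposition of an arbitrary $A_{\b}$-module $V$, I would first use the central idempotents $e_{i}\in L_{\b}$ cutting out the factors $L_{i}$: they give a canonical direct sum decomposition $V=\bigoplus_{i\in I}V_{i}$ with $V_{i}=e_{i}V$ an $A_{i}$-module, and $\Hom_{A_{\b}}(S_{i},V)=\Hom_{A_{i}}(S_{i},V_{i})$ since $S_{i}$ is annihilated by $e_{j}$ for $j\neq i$. It therefore suffices to work one factor at a time and show the natural evaluation map
\begin{equation*}
S_{i}\otimes_{D_{i}}\Hom_{A_{i}}(S_{i},V_{i})\longrightarrow V_{i},\qquad s\otimes f\longmapsto f(s),
\end{equation*}
is an isomorphism of $A_{i}$-modules. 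This is Morita equivalence between $A_{i}=\End_{D_{i}^{\opp}}(S_{i})$ and $D_{i}$ via the progenerator $S_{i}$; alternatively it follows from Artin--Wedderburn, since $V_{i}$ is a semisimple $A_{i}$-module, hence a direct sum of copies of $S_{i}$, and for $V_{i}=S_{i}^{\oplus r}$ both sides of the arrow have dimension $r\cdot\dim_{k}S_{i}$ with the map an obvious isomorphism.

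The only thing I would be careful about is the bookkeeping of left versus right actions. Since $D_{i}=\End_{A_{i}}(S_{i})^{\opp}$, the natural left $\End_{A_{i}}(S_{i})$-action on $S_{i}$ becomes a right $D_{i}$-action, and the space $M_{i}=\Hom_{A_{i}}(S_{i},V)$ inherits a left $D_{i}$-module structure by precomposition: $(d\cdot f)(s)=f(s\cdot d)$ for $d\in D_{i}$, $s\in S_{i}$, $f\in M_{i}$. With these conventions the tensor product $S_{i}\otimes_{D_{i}}M_{i}$ makes sense and the evaluation map is well-defined and $A_{i}$-linear.

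I don't expect any real obstacle here — the statement is essentially a packaging of Artin--Wedderburn for the semisimple algebra $A_{\b}$. The mild subtlety, as noted, is tracking the sides of the division-algebra actions, which matters for the formulations used later in the paper.
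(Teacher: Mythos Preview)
Your proposal is correct and is precisely the standard Artin--Wedderburn/Morita argument one expects here; the paper in fact gives no proof at all for this corollary, treating it as an immediate consequence of the preceding lemma and the product decomposition $A_{\b}=\prod_{i}A_{i}$, so your write-up simply makes explicit what the authors left implicit. Your attention to the left/right $D_{i}$-action conventions is appropriate, as these are indeed used in the subsequent constructions.
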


\subsection{The group $H$} Now we are ready to describe the group $H$ using the canonical decomposition \eqref{V decomp} for the $A_{\b}$-module $V$. We have an isomorphism of algebraic groups over $k$
\begin{equation}\label{GL H}
H=\GL_{A_{\b}/k}(V)\cong\prod_{i\in I}\GL_{D_{i}/k}(M_{i}).
\end{equation}
Under the above isomorphism, if $g\in H$ corresponds to $(g_{i})_{i\in I}$ on the right side, then
\begin{equation}\label{gi}
g(u\ot x)=u\ot g_{i}(x), \quad\forall i\in I, u\in S_{i}, x\in M_{i}.
\end{equation}

\subsection{The quiver $Q_{\xi}$}\label{ss:Q}
The action of $\xi\in \Xi_{m/n}$ on $A_{\b}$ induces an action on its center $L_{\b}$ by $\mu_{\xi}: b\mapsto \Nm_{F/k}(\xi)\cdot b$, hence a permutation on $I=\Spec L_{\b}$. We denote this permutation by $i\mapsto \ov\xi(i)$. Let $Q_{\xi}$ be the directed graph with vertex set $I$ and an arrow  $i\to \ov\xi(i)$ for each $i\in I$. Let $E$ be the set of arrows of $Q_{\xi}$. Each vertex $i\in I$ is decorated by the division algebra $D_{i}$.

In general, $Q_{\xi}$ is a disjoint union of cycles of not necessarily the same size.   In the special case where $k$ contains all $(m/n)$-th roots of unity, $L_{\b}$ is Galois over $k$, and $Q_{\xi}$ is a disjoint union of cycles of equal size.

\subsection{The $H$-module $\frg(\xi)$}\label{ss:GL g xi} Let $e:i\to \ov\xi(i)$ be an arrow in $Q_{\xi}$.
The automorphism $\mu_{\xi}$ of $A_{\b}$ restricts to an isomorphism $A_{i}\isom A_{\ov\xi(i)}$, hence a non-canonical isomorphism $\y_{e}: (S_{i})^{\xi}\cong S_{\ov\xi(i)}$. Once we fix a choice of $\y_{e}$, we get an isomorphism $\y^{\flat}_{e}: D_{i}\cong D_{\ov\xi(i)}$ by applying $\End_{A_{\b}}(-)^{\opp}$ to the source and target of $\y_{e}$. Note that even when $e$ is a self loop at $i=\ov\xi(i)$, the automorphism $\y^{\flat}_{e}$ of $D_{i}$ and even its restriction to the center $L_{i}$ may not be the identity.

We have a decomposition of $V^{\xi}$ as an $A$-module using the maps $\y_{e}$
\begin{equation*}
V^{\xi}=\bigoplus_{i\in I}(S_{i})^{\xi}\ot_{D_{i}} M_{i}\cong \bigoplus_{i\in I}S_{\ov\xi(i)}\ot_{D_{i}} M_{i}.
\end{equation*}
Here the action of $D_{i}$ on $S_{\ov\xi(i)}$ is via the isomorphism $\y^{\flat}_{e}$ for the arrow $e:i\to \ov\xi(i)$.
Hence
\begin{equation}\label{GL g xi}
\frg(\xi)=\Hom_{A_{\b}}(V^{\xi},V)=\bigoplus_{e: i\to \ov\xi(i)}\Hom_{D_{i}}(M_{i}, M_{\ov\xi(i)})
\end{equation}
where the sum runs over all arrows $e$ of $Q_{\xi}$.  Here $M_{\ov\xi(i)}$ is viewed as a $D_{i}$-module via the isomorphism $\y^{\flat}_{e}$.

Under the isomorphism \eqref{GL g xi}, if $\ph\in \Hom_{A_{\b}}(V^{\xi},V)$ corresponds to $(\ph_{e})_{e\in E}$ on the right side, then
\begin{equation}\label{phe}
\ph(u\ot x)=\y_{e}(u)\ot \ph_{e}(x), \quad\forall e:i\to \ov\xi(i), u\in S_{i}, x\in M_{i}.
\end{equation}
To summarize, $\frg(\xi)$ is the space of representations of the  quiver $Q_{\xi}$ (decorated by division algebras $D_{i}$) with a fixed dimension vector $\dim_{D_{i}}(M_{i})$ at vertex $i$.

%More concretely, let $\ell$ be the largest divisor of $m/n$ such that $\b$ is an $\ell$-th power. Then the space $\Spec k'$ consists of $\ell$ points, corresponding to $L_{i}=k[b_{i}]/(b_{i}^{m/(n\ell)}-\b_{i})$ where $\b_{i}$ are $\ell$-th roots of $\b$. The permutation $\ov\xi$ decomposes the $\ell$ points into cycles of equal size (depending on the order of $\Nm(\xi)$). Therefore $Q$ is a union of cyclic quivers of equal size, with a total of $\ell$ vertices.

\begin{exam} Consider the case  $F=\CC\lr{t}$ and $\z$ acts on $F$ by change of variables $t\mapsto \z_{n}t$ for some primitive $n$-th root of unity. Then $k=\CC\lr{\t}$ where $\t=t^{n}$. Without loss of generality we may assume $\b=t^{nr}=\t^{r}$ for some $r\in\ZZ$. Then $L=\CC\lr{\bb}/(\bb^{m/n}-\t^{r})$. Let $\ell=\gcd(m/n, r)$. Then $I$ can be identified with $\mu_{\ell}$, with $L_{\ep}\cong k[\bb]/(\bb^{\frac{m}{n\ell}}-\ep \t^{\frac{r}{\ell}})$ for $\ep\in \mu_{\ell}$.  We have $D_{\ep}=L_{\ep}$ since there are no nontrivial division algebras over $L_{\ep}$.

Let $\xi$ be a primitive $m$-th root of unity in $\CC$, so $\xi\in \Xi_{m/n}$. The action of $\ov\xi$ on $I=\mu_{\ell}$ is via multiplication by $\xi^{m/\ell}\in \mu_{\ell}$. In particular,   $Q_{\xi}$ is a single cycle of length $\ell$, with the vertices decorated by $L_{\ep}\cong \CC\lr{\t^{\frac{nr}{m}}}$. In this case,  we may rename the $M_{i}$ ($i\in I=\mu_{\ell}$) by $M_{0}, M_{1},\cdots, M_{\ell-1}$ so that $\frg(\xi)$ is the space of representation of the following cyclic quiver over $\CC\lr{\t^{\frac{nr}{m}}}$.
\begin{equation*}
\xymatrix{ & M_{1} \ar[r] & \cdots \ar[dr]\\
M_{0} \ar[ur]  &&& \cdots\ar[dl]\\
& M_{\ell-1}\ar[ul] &  \cdots\ar[l] & }
\end{equation*}
\end{exam}

%%%%%%%%%%%%%%%%%%%%%%%%%%%%%%%%%%%%%%
\section{Polarized case}\label{s:pol}%%%%%%%%%%%%%%
%%%%%%%%%%%%%%%%%%%%%%%%%%%%%%%%%%%%%%

In this section we extend the results of the previous section from $G=\GL_{F}(V)$ to other classical groups. We remark that even in the case $G=\GL_{F}(V)$ have not covered all finite order automorphisms of $G$ in the previous section; only inner ones are considered. The outer ones will be covered as a special case of the polarized setting in this section (see Example \ref{ex:outer A}).

We continue with the setup in \S\ref{ss:setup}. For the rest of the paper we assume $\ch(k)\ne2$.

\subsection{Involution}
Let $\s:F\to F$ be an involution that commutes with $\z$ ($\s$ maybe trivial). In particular,  $\s$ restricts to an involution on $k$.   For example, when $n$ is even, we may take $\s=\z^{n/2}$, in which case $\s|_{k}$ is trivial.

\subsection{Polarization} Let $\ep\in \{\pm1\}$. Let
\begin{equation*}
\j{\cdot, \cdot}: V\times V\to F
\end{equation*}
be a non-degenerate pairing such that
\begin{enumerate}
\item $\j{\cdot, \cdot}$ is $F$-linear in the first variable.
\item $\j{x,y}=\ep\s(\j{y,x})$. (This implies that $\j{\cdot, \cdot}$ is $(F,\s)$-semilinear in the second variable.)
\item $\j{\th x, \th y}= c\z(\j{x,y})$, for some $c\in (F^{\times})^{\s}$ such that
\begin{equation}\label{bc}
\Nm_{F/k}(c)^{m/n}=\b\s(\b).
\end{equation}
\end{enumerate}
%When we multiply $\th$ with $\l\in F^{\times}$, $\b$ changes to $\Nm_{F/k}(\l)^{m/n}\b$, and $c$ changes to $\l\s(\l)c$.

Let $G=\bAut_{F/F^{\s}}(V, \j{\cc})\subset \GL_{F/F^{\s}}(V)$; this is an algebraic group over $F^{\s}$. Let $\frg=\Lie G$ be the Lie algebra over $F^{\s}$. When $\s$ is trivial and $\ep=1$ (resp. $\ep=-1$), $G$ is the full orthogonal group (resp. symplectic group) attached to $(V, \j{\cc})$. When $\s$ is nontrivial, we may rescale $\j{\cc}$ to reduce to the case $\e=1$, in which case $G$ is the unitary group attached to the Hermitian space $(V, \j{\cc})$. By property (3) of the pairing $\Ad(\th)$ preserves the subgroup $G$ of $\GL_{F/F^{\s}}(V)$. Similarly, $\ad(\th)$ acts on the Lie algebra $\frg$.

\begin{exam}\label{ex:outer A} Take $F=k\times k$, and let $\z=\s$ be the swapping of  two factors. In this case, we write $V=V_{0}\oplus V_{1}$ for some $k$-vector spaces $V_{0},V_{1}$ using the idempotents in $F$. The pairing $\j{\cc}$ identifies $V_{1}$ as the $k$-linear dual $V_{0}^{*}$ of $V_{0}$. The automorphism $\th$ of $V$ sends $V_{0}$ to $V_{1}=V_{0}^{*}$ and $V_{1}=V_{0}^{*}$ to $V_{0}$. We have $G=\GL_{k}(V_{0})$, and $\Ad(\th)$ is an {\em outer} automorphism of $G$.
\end{exam}

\subsection{The problem}  In the situation above, we try to understand the following in terms of quivers ``with polarizations'':
\begin{enumerate}
\item $H:=(\bR_{F^{\s}/k^{\s}}G)^{\Ad(\th)}$ as an algebraic group over $k^{\s}$. Note that $H(k^{\s})=\{g\in \Aut_{F}(V)|\j{gx,gy}=\j{x,y},\forall x,y\in V;  g\th=\th g\}$.
\item Let $\xi\in \Xi_{m/n}\cap F^{\s}$. Consider the $k^{\s}$-vector space with $H$-action
\begin{equation*}
\frg(\xi):=\{\ph\in \End_{F}(V)| \th\ph\th^{-1}=\xi \ph, \j{\ph x, y}+\j{x,\ph y}=0, \forall x,y\in V\}.
\end{equation*}
If $\xi\notin \Xi_{m/n}\cap F^{\s}$, then the similarly defined $\frg(\xi)$ is zero.
\end{enumerate}

As in \S\ref{ss:ref GL} we may describe $H$ and $\frg(\xi)$ using the $A_{\b}$-module structure on $V$:
\begin{enumerate}
\item $H=\bAut_{A_{\b}/k^{\s}}(V,\j{\cc})$.
\item For $\xi\in \Xi_{m/n}\cap F^{\s}$,
\begin{equation*}
\frg(\xi)=\{\ph\in \Hom_{A_{\b}}(V^{\xi}, V)| \j{\ph x, y}+\j{x,\ph y}=0, \forall x,y\in V\}.
\end{equation*}
\end{enumerate}

Let $n'=[F^{\s}:k^{\s}]$. Note that $n'$ is either $n$ or $n/2$ (the latter happens if and only if $\s=\z^{n/2}$). When $n'=n$, $\th^{n}$ is an $F$-linear automorphism of $V$ satisfying
\begin{equation*}
\j{\th^{n}x,\th^{n}y}=\Nm_{F/k}(c)\j{x,y}, \quad\forall x,y\in V.
\end{equation*}
If $n'=n/2$, then $\th^{n'}$ is an $(F,\s)$-semilinear automorphism of $V$ satisfying
\begin{equation*}
\j{\th^{n'}x,\th^{n'}y}=\Nm_{F^{\s}/k}(c)\s\j{x,y}, \quad\forall x,y\in V.
\end{equation*}
In any case, $\Ad(\th^{n'})$ gives an automorphism of $G$ (over $F^{\s}$), and $\ad(\th^{n'})$ gives an automorphism of $\frg$.

The following proposition describes the pair $(H,\frg(\xi))$ after base change to $F^{\s}$ in terms of the $F^{\s}$-linear action of $\th^{n'}$ on $G$ and $\frg$.

\begin{prop}\label{p:base change} We have canonical isomorphisms
\begin{eqnarray*}
&&H_{F^{\s}}\cong G^{\Ad(\th^{n'})},\\
&&\frg(\xi)\ot_{k^{\s}}F^{\s}\cong \{\ph\in \frg|\ad(\th^{n'})\ph=\xi\ph\}
\end{eqnarray*}
compatible with the natural actions of the first row on the second row.
\end{prop}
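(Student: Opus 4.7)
The plan is to prove both isomorphisms simultaneously via Galois descent along the extension $E := F^{\s}/k^{\s}$. A preliminary case analysis (over $\s = \id$, $\s = \z^{n/2}$, and $\s$ nontrivial on $k$, using that $\s$ and $\z$ commute and $F^{\langle\z,\s\rangle} = k^{\s}$) establishes that $E/k^{\s}$ is cyclic Galois of degree $n'$, with Galois group $\Gamma = \langle\gamma\rangle$ generated by $\gamma := \z|_E$. Restricting scalars from $F$ to $E$, the $(F,\z)$-semilinear automorphism $\th$ becomes $(E,\gamma)$-semilinear, so $\th$ covers $\gamma$ on $\Spec E$; the iterate $\th^{n'}$ is then $E$-linear in every case (in Cases~1 and 3 because $\z^n = \id$, in Case~2 because $\z^{n/2}|_E = \id$).

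Next I would invoke the standard base change formula for Weil restriction: for any $E$-scheme $Y$,
\[
(\bR_{E/k^{\s}} Y) \otimes_{k^{\s}} E \;\cong\; \prod_{\delta \in \Gamma} Y^{\delta},
\]
where $Y^\delta$ denotes the $\delta$-twist. Applied to $Y = G$, the $(E,\gamma)$-semilinearity of $\th$ provides canonical $E$-linear isomorphisms $\iota_\delta : G^{\delta} \isom G^{\gamma \delta}$; under the above decomposition $\Ad(\th)$ then corresponds to the cyclic shift $\delta \mapsto \gamma\delta$ composed with $\{\iota_\delta\}$. A fixed tuple $(g_\delta)$ is thus determined by $g_1 \in G$ via iterated application of the $\iota_\delta$, and cycle-closure at $\gamma^{n'} = 1$ becomes exactly $\Ad(\th^{n'})(g_1) = g_1$, yielding $H_{F^{\s}} \cong G^{\Ad(\th^{n'})}$. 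Running the identical argument with $\frg$ in place of $G$ and the $\xi$-eigenvalue condition $\ad(\th)\ph = \xi\ph$ in place of fixed points gives the second isomorphism; compatibility with the natural actions of the first row on the second row is then automatic from the functoriality of the descent.

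The step I expect to require the most care is the restriction of the argument from $\GL_{F/F^{\s}}(V)$ (resp.\ $\End_F(V)$) to the pairing-preserving subgroup $G$ (resp.\ the self-adjointness subspace $\frg$). One must check that, under the $\th$-induced identifications $\iota_\delta$, the $\delta$-twist of the pairing-preserving subscheme is carried to $G$ itself; this is where the scaling relation $\j{\th x,\th y} = c\,\z(\j{x,y})$ and the normalization $\Nm_{F/k}(c)^{m/n} = \b\,\s(\b)$ enter crucially, ensuring that the pairing condition is $\Ad(\th)$-equivariant after base change. Once this compatibility is in place, the descent machinery yields the proposition formally.
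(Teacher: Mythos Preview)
Your approach is essentially the same as the paper's: both exploit that base-changing the Weil restriction along $F^{\s}/k^{\s}$ splits it into a product of $n'$ copies on which $\th$ acts by cyclic shift, so that the $\th$-invariance (resp.\ $\xi$-eigenvalue) condition collapses to a $\th^{n'}$-condition on a single factor. The paper simply writes this down concretely via the explicit isomorphism $F^{\s}\ot_{k^{\s}}V\cong V^{n'}$, $x\ot v\mapsto(\z^{i}(x)v)_{0\le i\le n'-1}$, rather than invoking the abstract Weil restriction base change formula; your preliminary case analysis on $\s$ is not needed, and the norm relation $\Nm_{F/k}(c)^{m/n}=\b\s(\b)$ plays no role here (only the scaling $\j{\th x,\th y}=c\,\z\j{x,y}$ is used to match up the pairings on the factors).
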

\begin{proof}
We have an isomorphism $F^{\s}\ot_{k^{\s}} V\cong V\oplus V\oplus\cdots\oplus V$ ($n'$ factors) sending $x\ot v$ to $(\z^{i}(x)v)_{0\le i\le n'-1}$. Under this isomorphism, $\id_{F^{\s}}\ot \th$ acts cyclically on the $n'$ factors, so that $\id_{F^{\s}}\ot\th^{n'}$ acts on each. The pairing $\j{\cc}$ defines a pairing on the first factor of $V$, and determines the pairings on the rest by property (3) of the pairing. An element $g\in H_{F^{\s}}$ (resp. $\ph\in \frg(\xi)\ot_{k^{\s}}F^{\s}$) is uniquely determined by its action on the first factor of $V$, on which it has to commute with $\th^{n'}$.
\end{proof}

\subsection{Duality for $A_{\b}$-modules} Let $U$ be an $A_{\b}$-module which is finite-dimensional over $F$. Let $U^{*}=\Hom_{F}(U,F)$ be the $F$-linear dual of $U$. Let $U^{\dm}$ be $U^{*}$ with the action of $F$ twisted by $\s$: i.e., for $a\in F$, $u^{*}\in U^{\dm}=U^{*}$ and $u\in U$, $(a\cdot u^{*}, u)=\s(a)(u^{*},u)$, where $(u^{*},u)$ denotes the canonical pairing between $U^{*}$ and $U$. We define an $A_{\b}$-module structure on $U^{\dm}$ by requiring the action of $\bth$ to be $(F,\z)$-semilinear and  satisfy
\begin{equation*}
(\bth u^{*}, u)=c\z(u^{*},\bth^{-1} u), \quad\forall u\in U, u^{*}\in U^{\dm}.
\end{equation*}
One readily checks that $\bth^{m}u^{*}=\Nm_{F/k}(c)^{m/n}\b^{-1}u^{*}=\s(\b)u^{*}$ under the (old) $F$-action on $U^{*}$, and hence $\bth^{m}u^{*}=\b\cdot u^{*}$  under the (new) $F$-action on $U^{\dm}$.

The assignment $U\mapsto U^{\dm}$ gives a contravariant auto-equivalence on the category of finite-dimensional $A_{\b}$-modules.  On  morphisms, it sends $f: U\to W$ to the transpose $f^{\vee}:W^{\dm}=W^{*}\to U^{*}=U^{\dm}$. % The operation $f\mapsto f^{\vee}$ induces a $(k,\s)$-semilinear involutive isomorphism $\Aut_{A_{\b}}(U)\isom \Aut_{A_{\b}}(U^{\dm})$ satisfying $(f_{1}f_{2})^{\dm}=f_{2}^{\dm}f_{1}^{\dm}$.

We have a canonical isomorphism of $A_{\b}$-modules $U\cong (U^{\dm})^{\dm}$  given by sending $u\in U$ to the $(F,\s)$-semilinear function $u^{*}\mapsto \s(u^{*},u)$ on $u^{*}\in U^{*}$ (which is the same as an $F$-linear function on $U^{\dm}$).

For $\xi\in \Xi_{m/n}$ (so that the twisting functor $(-)^{\xi}$ on $A_{\b}$-modules is defined as in \S\ref{ss:tw xi}), we have a canonical isomorphism
\begin{equation}\label{dmxi}
(U^{\xi})^{\dm}\cong (U^{\dm})^{\s(\xi)^{-1}}
\end{equation}
which is the identity on the underlying $F$-vector spaces.
%
%\subsection{Reformulation of the problem}
%The datum of the pairing is equivalent to an $A_{\b}$-module isomorphism
%\begin{equation}
%h: V\isom V^{\dm}={}^{\s}(V^{*}(c))
%\end{equation}
%sending $v\in V$ to the $F$-linear function $h(v): x\mapsto \j{x,v}$ on $V$, viewed as an element in $V^{*}=V^{*}\ot_{F}F(c)$. The condition on $\j{y,x}=\e\s\j{x,y}$ is equivalent to
%\begin{equation}
%h=\e h^{\dm} \in \Hom(V,V^{\dm}).
%\end{equation}
%Here we implicitly use
%
%Our problem can be reformulated as understanding the following in terms of quivers.
%\begin{enumerate}
%\item $H=\{g\in \Aut_{A_{\b}}(V)| g^{\dm}\circ h\circ g=h\}$.
%\item For $\xi\in \Xi_{m/n}\cap F^{\s}$, $\frg(\xi)=\{\ph\in \Hom_{A_{\b}}(V^{\xi},V)| h\circ \ph=-(\ph^{\dm})^{\xi}\circ h^{\xi}\}$. Here $h^{\xi}: V^{\xi}\to (V^{\dm})^{\xi}$ is the same map as $h$;  $(\ph^{\dm})^{\xi}:(V^{\dm})^{\xi}\to ((V^{\xi})^{\dm})^{\xi}\cong V^{\dm}$   (the last isomorphism uses \eqref{dmxi} and the fact that $\s(\xi)=\xi$).
%\end{enumerate}

\subsection{Involution on the quiver} We continue to use the notation $L_{\b}, I, Q_{\xi}, \ov\xi$ introduced in \S\ref{ss:L} and \S\ref{ss:Q}.

%Let $Q_{\xi}$ be the direct graph with vertex set $I=\Spec k'$. The action of $\ov\xi$ again gives a permutation of $I$ in terms of orbits of equal size.

Let $\s_{c}:L_{\b}\to L_{\b}$ be the involution that is $\s$ on $k$ and $\s_{c}(b)=\Nm_{F/k}(c)b^{-1}$. The relation \eqref{bc} implies that $\s_{c}$ is a well-defined ring automorphism. It induces an involution on the set $I=\Spec L_{\b}$ which we denote by $i\mapsto i^{\dm}$. In other words $\s_{c}$ restricts to an isomorphism $L_{i}\cong L_{i^{\dm}}$. For $\xi\in\Xi_{m/n}\cap F^{\s}$, direct calculation shows that  $\s_{c}\circ \mu_{\xi} \circ \s_{c}=\mu_{\xi^{-1}}$ as automorphisms of $L_{\b}$. Therefore the involution $(-)^{\dm}$ on $I$ reverses the arrows of the quiver $Q_{\xi}$.

\subsection{Pairing between simple $A_{\b}$-modules}\label{ss:pairing}
The involution $U\mapsto U^{\dm}$ on  $A_{\b}$-modules induces an involution on the set of isomorphism classes of simple $A_{\b}$-modules. In particular, for each $i\in I$, $S_{i}^{\dm}$ is a simple $A_{\b}$-module isomorphic to $S_{i^{\dm}}$ by comparing the actions of $L_{\b}$. For each $i$, an isomorphism of $A_{\b}$-modules $\a_{i}: S_{i}^{\dm}\cong S_{i^{\dm}}$ is the same data as a perfect pairing
\begin{equation}\label{pS}
\j{\cc}_{i}: S_{i}\times S_{i^{\dm}}\to F
\end{equation}
satisfying
\begin{enumerate}
\item $\j{\cc}_{i}$ is $F$-linear in the first variable and $(F,\s)$-semilinear in the second variable.
\item $\j{\bth u,\bth v}_{i}=c\z(\j{u,v}_{i})$.
\end{enumerate}
Indeed, $\a_{i}$ determines the pairing $\j{\cc}_{i}$ characterized by $\j{u,\a_{i}(u^{*})}_{i}=(u^{*},u)$, for $u\in S_{i}, u^{*}\in S_{i}^{\dm}=S_{i}^{*}$. Conversely, any pairing $\j{\cc}_{i}$ as above induces a map $S_{i^{\dm}}\to S_{i}^{*}=S_{i}^{\dm}$ which is $F$-linear by first property and intertwines the $\bth$-action by the second. In particular, any nonzero pairing $\j{\cc}_{i}$ satisfying (1) and (2) above must be a perfect pairing. We call a pairing as in \eqref{pS} satisfying (1) and (2) above {\em admissible}.

Let $\j{\cc}_{i}$ be a perfect admissible pairing $S_{i}\times S_{i^{\dm}}\to F$. For each $d\in D_{i}$, there is a unique $\d_{i}(d)\in D_{i^{\dm}}$ such that
\begin{equation}\label{delta pairing}
\j{ud,v}_{i}=\j{u,v\d_{i}(d)}, \quad\forall u\in S_{i}, v\in S_{i^{\dm}}.
\end{equation}
Here we write the action of $D_{i}=\End_{A_{\b}}(S_{i})^{\opp}$ on $S_{i}$ as right multiplication. The assignment $d\mapsto \d_{i}(d)$ defines a $(k,\s)$-semilinear isomorphism of algebras
\begin{equation*}
\d_{i}: D_{i}\to D_{i^{\dm}}^{\opp}
\end{equation*}
which restricts to $\s_{c}: L_{i}\cong L_{i^{\dm}}$ on the centers.   Note that $\d_{i}$ depends on the choice of the admissible pairing $\j{\cc}_{i}$.

When $i=i^{\dm}$, an admissible pairing $\j{\cc}_{i}: S_{i}\times S_{i}\to F$ is called {\em Hermitian} if it satisfies $\j{v,u}_{i}=\s(\j{u,v}_{i})$ for all $u,v\in S_{i}$. It is called {\em skew-Hermitian} if it satisfies $\j{v,u}_{i}=-\s(\j{u,v}_{i})$ for all $u,v\in S_{i}$.

%Then $\a_{i}$ induces a $(k,\s)$-semilinear isomorphism
%\begin{equation*}
%\d_{i}: D_{i}=\End_{A_{\b}}(S_{i})^{\opp}\xr{(-)^{\dm}}\End_{A_{\b}}(S^{\dm}_{i})\xr{\a_{i}\c(-)\c\a_{i}^{-1}}\End_{A_{\b}}(S_{i^{\dm}})=D_{i^{\dm}}^{\opp}
%\end{equation*}
%which restricts to $\s_{c}: L_{i}\cong L_{i^{\dm}}$ on the centers.

\begin{lemma}\label{l:pairing}
Suppose $i=i^{\dm}$. Then one of the following happens.
\begin{enumerate}
\item There exists a perfect Hermitian admissible pairing on $S_{i}$.
\item All admissible pairings on $S_{i}$ are skew-Hermitian. This can only happen when $D_{i}=L_{i}$, $\s_{c}|_{L_{i}}=\id$ and $\s\ne\id_{F}$ (in particular, $\s=\z^{n/2}$ and $b_{i}^{2}=\Nm_{F/k}(c)$).
\end{enumerate}
\end{lemma}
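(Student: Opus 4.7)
The set $P$ of pairings $S_{i}\times S_{i}\to F$ satisfying conditions (1) and (2) above is a $k^{\s}$-vector space, on which the involution $\j{u,v}^{*}:=\s(\j{v,u})$ acts (a direct check shows $*$ preserves $P$ and squares to the identity). Since $\ch(k)\ne 2$, I would decompose $P=P^{+}\oplus P^{-}$ into $\pm 1$-eigenspaces; by construction $P^{+}$ is the space of Hermitian pairings and $P^{-}$ the space of skew-Hermitian ones. The hypothesis $i=i^{\dm}$ gives $S_{i}^{\dm}\cong S_{i}$, hence $P\ne 0$, and any nonzero element of $P$ is automatically perfect. So the dichotomy is immediate: $P^{+}\ne 0$ yields case~(1), and $P^{+}=0$ yields case~(2).

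To extract the structural conditions in case~(2), I would fix $\j{\cc}_{0}\in P\setminus\{0\}$ and parametrize $P$ by $D_{i}$ via $d\mapsto \j{\cc}_{0}\cdot d$, where $(\j{\cc}_{0}\cdot d)(u,v):=\j{u,vd}_{0}$. This identifies $P$ with the $D_{i}^{\times}$-torsor of $A_{\b}$-module isomorphisms $S_{i}^{\dm}\cong S_{i}$, and under it the involution $*$ transports to $\tau(d)=d_{0}\d_{0}(d)$, where $d_{0}\in D_{i}^{\times}$ is defined by $\j{\cc}_{0}^{*}=\j{\cc}_{0}\cdot d_{0}$ and $\d_{0}:D_{i}\to D_{i}^{\opp}$ is the $(k,\s)$-semilinear anti-algebra map from \eqref{delta pairing}. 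The condition $P^{+}=0$ reads $\tau=-\id_{D_{i}}$: substituting $d=1$ gives $d_{0}=-1$, and the remaining equation $-\d_{0}(d)=-d$ forces $\d_{0}=\id_{D_{i}}$. Since $\d_{0}$ is then simultaneously the identity and an anti-algebra map, $D_{i}$ must be commutative, so $D_{i}=L_{i}$; the restriction $\d_{0}|_{L_{i}}=\s_{c}|_{L_{i}}=\id$ gives $b_{i}^{2}=\Nm_{F/k}(c)$; and $(k,\s)$-semilinearity of the identity forces $\s|_{k}=\id$.

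The remaining claim $\s\ne\id_{F}$ in case~(2) is the main obstacle, since at the abstract level of $\tau$ on $D_i$ the condition $\tau=-\id$ does not obviously rule out $\s=\id_F$. I would prove it by contradiction: assume additionally $\s=\id_{F}$, and construct a nonzero symmetric admissible pairing on $S_i$, violating $P^{+}=0$. With $D_{i}=L_{i}$ we have $A_{i}\cong M_{n}(L_{i})$, and the commutative étale subalgebra $E:=F\cdot L_{i}\cong F\otimes_{k}L_{i}\subset A_{i}$ acts faithfully on $S_{i}$, making $S_{i}$ free of rank one over $E$ (component-wise, if $E$ is not a field). Under an identification $S_{i}\cong E$, the operator $\bth$ takes the form $\bth(u)=x\cdot\z(u)$ for some $x\in E^{\times}$ with $\Nm_{E/L_{i}}(x)=b_{i}$, where $\z$ is extended to $E$ as $\z\otimes\id_{L_{i}}$. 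I would then seek a symmetric admissible pairing of the form $\j{u,v}_{\alpha}:=\Tr_{E/F}(\alpha uv)$ for $\alpha\in E^{\times}$: symmetry is automatic from commutativity of $E$, and the $\bth$-compatibility $\j{\bth u,\bth v}_{\alpha}=c\z(\j{u,v}_{\alpha})$ reduces, using the non-degeneracy of the $E/F$ trace pairing and the fact that $\Tr_{E/F}$ commutes with $\z$, to the equation $\alpha/\z(\alpha)=c/x^{2}$ in $E^{\times}$. The case~(2) relation $b_{i}^{2}=\Nm_{F/k}(c)$ precisely ensures $\Nm_{E/L_{i}}(c/x^{2})=1$, so Hilbert~90 applied to the cyclic extension $E/L_{i}$ produces the required $\alpha$, yielding the desired contradiction and completing the proof.
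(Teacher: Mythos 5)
Your proof is correct, and while the first two steps repackage the paper's argument, the last step is genuinely different. The eigenspace decomposition $P=P^{+}\oplus P^{-}$ and the transported involution $\tau(d)=d_{0}\d_{0}(d)$ are a cleaner, more structural way of saying what the paper does by hand: the paper averages ($\jj{u,v}+\s\jj{v,u}$) to produce a Hermitian pairing, and in the skew case shows directly that $\j{u,v}_{d}:=\j{ud,v}_{i}$ being skew for all $d$ forces $\d_{i}=\id$, hence $D_{i}$ commutative and $\s_{c}|_{L_{i}}=\id$ --- exactly your $d_{0}=-1$, $\d_{0}=\id$ computation. The real divergence is in ruling out $\s=\id_{F}$. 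The paper argues negatively: if $\s=\id_{F}$ the pairing is alternating, and combined with $\d_{i}|_{L_{i}}=\id$ one gets $\j{Eu,u}_{i}=0$ for $E=F\ot_{k}L_{i}$; since $S_{i}$ is free of rank one over $E$, this contradicts perfectness. You argue positively, constructing a symmetric admissible pairing $\Tr_{E/F}(\alpha uv)$ whose existence reduces, via non-degeneracy of the trace form, to solving $\alpha/\z(\alpha)=c/x^{2}$, which Hilbert 90 handles because $b_{i}^{2}=\Nm_{F/k}(c)$ makes the norm of $c/x^{2}$ equal to $1$. Both routes rest on the same structural input (that $S_{i}$ is free of rank one over the maximal \'etale subalgebra $E$); the paper's is shorter and avoids cohomology, while yours is constructive and explains \emph{why} a Hermitian pairing must exist rather than merely why its absence is absurd. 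Two small points to tighten: since $F$ and hence $E$ need not be fields, you should say explicitly that you are using Hilbert 90 for the cyclic \emph{Galois algebra} $E/L_{i}$ (true, e.g.\ by Shapiro's lemma after writing $E$ as an induced algebra, but worth a word); and the identification of $P^{+}$ with Hermitian pairings uses $\ch(k)\ne 2$, which the paper assumes from \S3 onward, so you should cite that hypothesis.
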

\begin{proof}
Start with any nonzero admissible pairing $(u,v)\mapsto \jj{u,v}$ on $S_{i}$. Then  $(u,v)\mapsto \s\jj{v,u}$ is another admissible pairing. Therefore, if $\jj{u,v}+\s\jj{v,u}$ is not identically zero, it gives a perfect Hermitian admissible pairing.

Now suppose a  perfect Hermitian admissible pairing on $S_{i}$ does not exist. This means $\jj{u,v}+\s\jj{v,u}=0$ for any $u,v\in S_{i}$ and any admissible pairing  $\jj{\cc}$ on $S_{i}$. In other words, all admissible pairings on $S_{i}$ are skew-Hermitian.  Pick any perfect skew-Hermitian admissible pairing  $\j{\cc}_{i}$ on $S_{i}$. Let $\d_{i}: D_{i}\isom D_{i}^{\opp}$ be the corresponding isomorphism characterized by \eqref{delta pairing}. For $d\in D_{i}$, the pairing $\j{u,v}_{d}:=\j{ud,v}_{i}$ is also admissible, hence also skew-Hermitian. Then we have $\s\j{ud,v}_{i}=\s\j{u,v}_{d}=-\j{v,u}_{d}=-\j{vd,u}_{i}=-\j{v,u\d_{i}(d)}_{i}=\s\j{u\d_{i}(d), v}_{i}$ for all $u,v\in S_{i}$, hence $\d_{i}(d)=d$ for all $d\in D_{i}$. In this case, $D_{i}$ must be commutative since $\d_{i}$ is an anti-automorphism of $D_{i}$. Hence $D_{i}=L_{i}$. Since $\d_{i}$ restricts to $\s_{c}$ on $L_{i}$, we must have  $\s_{c}|_{L_{i}}=\id$.

It remains to show that $\s\ne\id_{F}$ when the above situation happens. Suppose in contrary that $\s=\id_{F}$, then $\j{\cc}_{i}$ is skew-symmetric, hence $\j{u,u}_{i}=0$ for any $u\in S_{i}$. Moreover, $\j{(a\ot \ell)u,u}_{i}=a\ell\j{u,u}=0$ for any $a\in F, \ell\in L_{i}$. Since $D_{i}=L_{i}$, we have $A_{i}\cong M_{n}(L_{i})$ and $F\ot_{k}L_{i}$ is maximal abelian subalgebra in $A_{i}$. Hence $S_{i}$ is a rank one free $F\ot_{k}L_{i}$-module. If we choose $u\in S_{i}$ generating $S_{i}$ as an  $F\ot_{k}L_{i}$-module,  then $\j{S_{i},u}_{i}=0$, contradicting the fact that $\j{\cc}_{i}$ is a perfect pairing. This finishes the argument.
\end{proof}

\subsection{Choice of admissible pairings} For the rest of the section, for each $i=i^{\dm}$ we fix a perfect Hermitian admissible pairing $\j{\cc}_{i}$ on $S_{i}$ if there exists one; otherwise we fix a perfect skew-Hermitian admissible pairing $\j{\cc}_{i}$ on $S_{i}$. Moreover, for $i\ne i^{\dm}$, we choose perfect admissible pairings  $\j{\cc}_{i}$ and $\j{\cc}_{i^{\dm}}$ such that
\begin{equation}\label{Herm}
\j{v,u}_{i^{\dm}}=\s(\j{u,v}_{i}), \quad\forall u\in S_{i}, v\in S_{i^{\dm}}.
\end{equation}
By our choice, for each $i\in  I$, there is a sign $\e_{i}\in \{\pm1\}$ such that
\begin{equation}\label{pairing comp}
\j{v,u}_{i^{\dm}}=\e_{i}\s(\j{u,v}_{i}), \quad\forall u\in S_{i}, v\in S_{i^{\dm}}.
\end{equation}
Moreover, the case $\e_{i}=-1$ can happen only in the situation (2) of Lemma \ref{l:pairing}.

Define $\d_{i}: D_{i}\isom D_{i^{\dm}}^{\opp}$ using the chosen $\j{\cc}_{i}$ as in \S\ref{ss:pairing}. The property \eqref{pairing comp} implies
\begin{equation*}
\d_{i^{\dm}}\c\d_{i}=\id_{D_{i}}.
\end{equation*}
In particular, for $i=i^{\dm}$, $\d_{i}$ is an anti-involution on $D_{i}$.

%The composition $$ takes the form
%\begin{equation}
%\d_{i^{\dm}}\c\d_{i}(\ph)=(\a_{i^{\dm}}\c(\a^{\dm}_{i})^{-1})\c\ph\c(\a_{i}^{\dm}\c\a_{i^{\dm}}^{-1}), \forall \ph\in D_{i}=\End_{A_{\b}}(S_{i})^{\opp}.
%\end{equation}

%\begin{equation}\label{action on Di}
%\d_{i^{\dm}}: D_{i^{\dm}}=\End_{A_{\b}}(S_{i^{\dm}})^{\opp}\xr{\a_{i}^{-1}\c(-)\c\a_{i}} \End_{A_{\b}}(S^{\dm}_{i})^{\opp}\xr{(-)^{\dm}} \End_{A_{\b}}(S_i) \cong D_{i}^{\opp}
%\end{equation}
%which restricts to $\s_{c}: L_{i^{\dm}}\cong L_{i}$ on the centers.
%
%When $i=i^{\dm}$, $\d_{i}$ gives an anti-involution of $D_{i}$ which restricts to $\s$ on $k$:
%For $d, d_1, d_2 \in \End_{A_{\b}}(S_i)$,
%\[
%\d_i(d_1d_2) =  {}^{\s}(\a_i^{-1}\circ (d_1d_2) \circ \a_i)^{\vee} = \d_i(d_2)\d_i(d_1);
%\]
%\begin{eqnarray*}
%\d_i^2(d) &=& \d_i\circ {}^{\s}(\a_i^{-1}\circ d \circ \a_i)^{\vee}\\
%&=& \d_i({}^{\s}\a_i^{\vee}\circ {}^{\s}d^{\vee}\circ {}^{\s}(\a_i^{-1})^{\vee})\\
%&=& \d_i(\a_i\circ {}^{\s}d^{\vee}\circ\a_i^{-1})\\
%&=& d.
%\end{eqnarray*}

\begin{lemma}\label{l:pre M}
There is a unique pairing
\begin{equation*}
\{\cc\}'_{i}: M_{i}\times M_{i^{\dm}}\to D_{i}
\end{equation*}
characterized by the following property
\begin{equation}\label{xy}
\j{u\ot x, v\ot y}=\j{u\{x,y\}'_{i},v}_{i}, \quad\forall u\in S_{i}, v\in S_{i^{\dm}}, x\in M_{i}, y\in M_{i^{\dm}}.
\end{equation}
Moreover, the pairing $\{\cc\}'_{i}$ satisfies the following identities for all $x\in M_{i}, y\in M_{i^{\dm}}$ and $d\in D_{i}$
\begin{eqnarray}
\label{dx}  \{dx, y\}'_{i}=d\{x,y\}'_{i}; \\
\label{dy}   \{x, \d_{i}(d)y\}'_{i}=\{x,y\}'_{i}d; \\
\label{sw xy} \{y,x\}'_{i^{\dm}}=\e\e_{i}\d_{i}(\{x,y\}'_{i}).
\end{eqnarray}
(indeed \eqref{dy} follows from \eqref{dx} and \eqref{sw xy}).
\end{lemma}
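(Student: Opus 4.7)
The plan is to first establish existence and uniqueness of $\{x,y\}'_i$, and then verify \eqref{dx}, \eqref{sw xy}, and \eqref{dy} in that order, with \eqref{dy} deduced formally from the other two.

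For existence and uniqueness, I will fix $x\in M_i$ and $y\in M_{i^\dm}$ and observe that the assignment $(u,v)\mapsto \langle u\otimes x, v\otimes y\rangle$ is an admissible pairing $S_i\times S_{i^\dm}\to F$ in the sense of \S\ref{ss:pairing}. Indeed, $F$-linearity in $u$, $(F,\sigma)$-semilinearity in $v$, and the $\theta$-equivariance condition (2) all descend from the corresponding properties of $\langle\cdot,\cdot\rangle$ on $V$, using that in the decomposition \eqref{V decomp} the semilinear automorphism $\theta$ acts on the $S_j$-factor and fixes $M_j$. The key observation is then that admissible pairings $S_i\times S_{i^\dm}\to F$ are in bijection with $D_i$ via $d\mapsto ((u,v)\mapsto \langle ud,v\rangle_i)$: the map is well-defined because right multiplication by $D_i$ on $S_i$ commutes with the left $A_\beta$-action, and bijective because any admissible pairing corresponds, via the base point $\langle\cdot,\cdot\rangle_i$, to an $A_\beta$-module endomorphism of $S_{i^\dm}\cong S_i^\dm$, and $\End_{A_\beta}(S_i)^{\opp}=D_i$. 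This yields the unique $\{x,y\}'_i\in D_i$ satisfying \eqref{xy}, and biadditivity in both variables is immediate from uniqueness applied to sums.

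For \eqref{dx}, I will use $u\otimes dx = ud\otimes x$ in $S_i\otimes_{D_i}M_i$, which together with \eqref{xy} yields $\langle u\{dx,y\}'_i, v\rangle_i = \langle ud\otimes x, v\otimes y\rangle = \langle u(d\{x,y\}'_i), v\rangle_i$, and uniqueness concludes. For \eqref{sw xy}, I will apply property (ii) of $\langle\cdot,\cdot\rangle$ on $V$ to $(u\otimes x,v\otimes y)$: the left-hand side equals $\langle v\{y,x\}'_{i^\dm}, u\rangle_{i^\dm}$ by \eqref{xy} applied to $i^\dm$; the right-hand side $\epsilon\,\sigma\langle u\{x,y\}'_i, v\rangle_i$ becomes $\epsilon\epsilon_i\langle v, u\{x,y\}'_i\rangle_{i^\dm}$ via \eqref{pairing comp}, and then $\epsilon\epsilon_i\langle v\,\delta_i(\{x,y\}'_i), u\rangle_{i^\dm}$ via \eqref{delta pairing} for $i^\dm$, using $\delta_{i^\dm}\circ\delta_i=\id_{D_i}$ to identify $\delta_i$ as the set-theoretic inverse of $\delta_{i^\dm}$. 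Uniqueness of $\{\cdot,\cdot\}'_{i^\dm}$ then gives \eqref{sw xy}. Finally, \eqref{dy} is derived by applying \eqref{sw xy} to $\{x,\delta_i(d)y\}'_i$, expanding with \eqref{dx} on the $i^\dm$ side, and applying \eqref{sw xy} once more, using that $\delta_{i^\dm}$ is an anti-automorphism with $\delta_{i^\dm}\circ\delta_i=\id$.

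The main subtlety is the bookkeeping in \eqref{sw xy}: correctly tracking the sign $\epsilon_i$ from \eqref{pairing comp} and using that $\delta_i:D_i\to D_{i^\dm}^{\opp}$ is an anti-isomorphism whose set-theoretic inverse is $\delta_{i^\dm}$. Everything else reduces to routine applications of the decomposition \eqref{V decomp} and the admissible-pairing formalism of \S\ref{ss:pairing}.
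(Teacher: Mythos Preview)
Your proposal is correct and follows essentially the same route as the paper. The only difference is packaging: the paper fixes $u,x,y$, represents $v\mapsto\j{u\ot x,v\ot y}$ as $\j{u',v}_{i}$ via perfectness of $\j{\cc}_{i}$, and then checks directly that $u\mapsto u'$ is $A_{\b}$-linear (hence $u'=ud$ for a unique $d\in D_{i}$), whereas you phrase the same argument as ``admissible pairings $S_{i}\times S_{i^{\dm}}\to F$ form a free rank-one right $D_{i}$-module generated by $\j{\cc}_{i}$''; the verifications of \eqref{dx}, \eqref{sw xy}, and \eqref{dy} are then identical in substance.
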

\begin{proof}
For fixed $u\in S_{i}, x\in M_{i}$ and $y\in M_{i^{\dm}}$, the assignment $v\mapsto \j{u\ot x, v\ot y}$ is an $(F,\s)$-semilinear function $S_{i^{\dm}}\to F$, and therefore can be written as $v\mapsto\j{u',v}_{i}$ for a unique $u'\in S_{i}$. The assignment $u\mapsto u'$ gives an $F$-linear endomorphism of $S_{i}$. We claim that $u\mapsto u'$ is  moreover $A_{\b}$-linear. Indeed, it is enough to check $\j{\bth u\ot x,v\ot y}=\j{\bth u',v}_{i}$, which follows by comparing property (3) of $\j{\cc}$ and property (2) of $\j{\cc}_{i}$. Since $u\mapsto u'$ is $A_{\b}$-linear, there is a unique $d\in D_{i}$ such that $u'=ud$ for all $u\in S_{i}$. We then define $\{x,y\}'_{i}=d\in D_{i}$. The properties \eqref{dx} and \eqref{dy} are easy to verify using \eqref{delta pairing}. The property \eqref{sw xy} is verified using property (2) of the pairing $\j{\cc}$ on $V$ and property \eqref{pairing comp} of the pairings $\j{\cc}_{i}$.
\end{proof}

\begin{defn} Let $\{\cc\}_{i}$ be the $L_{i}$-valued pairing
\begin{eqnarray*}
\{\cc\}_{i}: M_{i}\times M_{i^{\dm}}&\to& L_{i}\\
(x,y) &\mapsto & \Trd_{D_{i}/L_{i}}\{x,y\}'_{i}
\end{eqnarray*}
where $\Trd_{D_{i}/L_{i}}: D_{i}\to L_{i}$ is the reduced trace.
\end{defn}

\begin{remark}\label{r:Trd} The $D_{i}$-valued pairing $\{\cc\}'_{i}$ satisfying \eqref{dx} can be recovered from the $L_{i}$-valued pairing $\{\cc\}_{i}$. Indeed, $\{x,y\}'_{i}$ is the unique element $z\in D_{i}$ such that $\Trd_{D_{i}/L_{i}}(dz)=\{dx,y\}_{i}$ for all $d\in D_{i}$.
\end{remark}

\begin{cor}[of Lemma \ref{l:pre M}]\label{c:Lpairing} The pairing $\{\cc\}_{i}$ is $L_{i}$-linear in the first variable and $(L_{i},\s_{c})$-semilinear in the second variable, and
\begin{eqnarray*}
\{y,x\}_{i^{\dm}}=\e\e_{i}\s_{c}(\{x,y\}_{i}), \quad\forall x\in M_{i}, y\in M_{i^{\dm}}.
\end{eqnarray*}
\end{cor}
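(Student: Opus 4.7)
The plan is to deduce all three assertions by applying the appropriate reduced trace to each of the identities \eqref{dx}, \eqref{dy}, and \eqref{sw xy} from Lemma \ref{l:pre M}. The only input beyond formal manipulation is that $\Trd_{D_i/L_i}$ is $L_i$-linear and that the reduced trace is compatible with the anti-isomorphism $\d_i$, in the sense that
$$\Trd_{D_{i^{\dm}}/L_{i^{\dm}}}(\d_i(z)) = \s_c(\Trd_{D_i/L_i}(z)), \qquad z \in D_i.$$
This last compatibility is the only step requiring real care; I would prove it by base changing $D_i$ and $D_{i^{\dm}}$ to a common separable splitting field $K$, under which $\d_i$ becomes (a $\s_c$-twist of) matrix transposition and the reduced trace becomes the ordinary matrix trace, so that the identity reduces to the invariance of trace under transposition. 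I expect this to be the main (though essentially routine) obstacle.

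Taking $d = \ell \in L_i$ in \eqref{dx} and using centrality gives $\{\ell x, y\}'_i = \ell\{x, y\}'_i$; applying $\Trd_{D_i/L_i}$ and its $L_i$-linearity then yields $\{\ell x, y\}_i = \ell\{x, y\}_i$, the $L_i$-linearity in the first variable. For the second variable, $\d_i|_{L_i}$ equals $\s_c$, so \eqref{dy} with $d = \ell \in L_i$ reads $\{x, \s_c(\ell) y\}'_i = \ell\{x, y\}'_i$, and taking $\Trd_{D_i/L_i}$ produces $\{x, \s_c(\ell) y\}_i = \ell\{x, y\}_i$; equivalently, writing $\ell' = \s_c(\ell) \in L_{i^{\dm}}$ and using that $\s_c^{2}=\id$ on $L_\b$, we obtain $\{x, \ell' y\}_i = \s_c(\ell')\{x, y\}_i$, which is the asserted $(L_i, \s_c)$-semilinearity.

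Finally, applying $\Trd_{D_{i^{\dm}}/L_{i^{\dm}}}$ to both sides of \eqref{sw xy} and invoking the trace-$\d_i$ compatibility above with $z = \{x, y\}'_i$ gives
$$\{y, x\}_{i^{\dm}} = \e\e_i\,\Trd_{D_{i^{\dm}}/L_{i^{\dm}}}\bigl(\d_i(\{x, y\}'_i)\bigr) = \e\e_i\,\s_c(\{x, y\}_i),$$
completing the swap formula.
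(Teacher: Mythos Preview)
Your proof is correct and is precisely the routine verification the paper leaves implicit: the corollary is stated without proof, and your argument---taking $\Trd_{D_i/L_i}$ of each of \eqref{dx}, \eqref{dy}, \eqref{sw xy} and using the standard compatibility of reduced trace with the anti-isomorphism $\d_i$---is exactly the intended one.
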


\begin{lemma}\label{l:H}
Let $g\in \Aut_{A_{\b}}(V)$ correspond to a family of automorphisms $g_{i}\in \Aut_{D_{i}}(M_{i})$ under \eqref{GL H}. Then $g$ preserves the form $\j{\cc}$ on $V$ if and only if for all $i\in I$,
\begin{equation}\label{gi pairing}
\{x,y\}_{i}=\{g_{i}(x), g_{i^{\dm}}(y)\}_{i}, \quad\forall x\in M_{i}, y\in M_{i^{\dm}}.
\end{equation}
\end{lemma}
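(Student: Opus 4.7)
The plan is to use the explicit formula \eqref{xy} expressing $\langle\cc\rangle$ on $V$ in terms of the $D_i$-valued pairings $\{\cc\}'_i$, and then pass to the $L_i$-valued reductions via Remark \ref{r:Trd}.

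First, I would check that the pairing between two different isotypic components $S_i\ot_{D_i}M_i$ and $S_j\ot_{D_j}M_j$ vanishes unless $j=i^{\dm}$. Since $\j{\cc}$ induces an $A_{\b}$-module isomorphism $V\cong V^{\dm}$, and this decomposes according to the canonical decomposition of $V$, the pairing between distinct simple types can only be nonzero between $S_i\ot M_i$ and $S_{i^{\dm}}\ot M_{i^{\dm}}$ (the component of $V^{\dm}$ whose $S_{i^{\dm}}$-isotypic part receives the dual of the $S_i$-isotypic part of $V$). Hence both the invariance of $\j{\cc}$ and the condition \eqref{gi pairing} decouple into conditions indexed by $i\in I$, and by the symmetry \eqref{sw xy} we may treat $i$ and $i^{\dm}$ together.

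Next, by \eqref{xy} and the formula $g(u\ot x)=u\ot g_i(x)$ from \eqref{gi}, for $u\in S_i$, $v\in S_{i^{\dm}}$, $x\in M_i$, $y\in M_{i^{\dm}}$ we have
\begin{equation*}
\j{g(u\ot x), g(v\ot y)}=\j{u\{g_i(x), g_{i^{\dm}}(y)\}'_i, v}_i,
\end{equation*}
while $\j{u\ot x, v\ot y}=\j{u\{x,y\}'_i,v}_i$. Since $\j{\cc}_i$ is a perfect pairing between $S_i$ and $S_{i^{\dm}}$ and these identities hold for all $u,v$, invariance of $\j{\cc}$ is equivalent to the pointwise equality
\begin{equation*}
\{g_i(x), g_{i^{\dm}}(y)\}'_i=\{x,y\}'_i \quad \text{in } D_i, \quad \forall x\in M_i, y\in M_{i^{\dm}}, i\in I.
\end{equation*}

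Finally, I translate the $D_i$-valued condition to the $L_i$-valued condition $\{g_i(x),g_{i^{\dm}}(y)\}_i=\{x,y\}_i$. One direction is trivial: apply $\Trd_{D_i/L_i}$. For the converse, suppose the $L_i$-valued equality holds for all $x,y$. Since $g_i$ is $D_i$-linear, for any $d\in D_i$ we may replace $x$ by $dx$ to get
\begin{equation*}
\{dx,y\}_i=\{g_i(dx), g_{i^{\dm}}(y)\}_i=\{d\, g_i(x), g_{i^{\dm}}(y)\}_i.
\end{equation*}
By Remark \ref{r:Trd}, the element $\{x,y\}'_i\in D_i$ is uniquely characterized by $\Trd_{D_i/L_i}(d\,\{x,y\}'_i)=\{dx,y\}_i$ for all $d\in D_i$, and likewise for $\{g_i(x), g_{i^{\dm}}(y)\}'_i$; the displayed identity therefore forces $\{g_i(x), g_{i^{\dm}}(y)\}'_i=\{x,y\}'_i$, completing the proof.

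The only mildly subtle point is step one (the orthogonality between distinct isotypic components); the rest is a mechanical unpacking of the definitions together with Remark \ref{r:Trd}.
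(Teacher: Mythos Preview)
Your proof is correct and follows essentially the same approach as the paper's: compute $\j{g(u\ot x),g(v\ot y)}$ via \eqref{gi} and \eqref{xy}, reduce invariance to the equality $\{g_i(x),g_{i^{\dm}}(y)\}'_i=\{x,y\}'_i$, and then invoke Remark~\ref{r:Trd} to pass between the $D_i$-valued and $L_i$-valued pairings. You are in fact slightly more thorough than the paper, which leaves implicit both the orthogonality between isotypic components for $j\ne i^{\dm}$ and the two directions of the equivalence furnished by Remark~\ref{r:Trd}.
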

\begin{proof}
By \eqref{gi}, for $u\in S_{i}, v\in S_{i^{\dm}}, x\in M_{i}$ and $y\in M_{i^{\dm}}$, we have
\begin{equation*}
\j{g(u\ot x), g(v\ot y) }=\j{u\ot g_{i}(x), v\ot g_{i^{\dm}}(y)}=\j{u\{g_{i}(x), g_{i^{\dm}}(y)\}'_{i}, v}_{i}.
\end{equation*}
Comparing with \eqref{xy} we get $\{x,y\}'_{i}=\{g_{i}(x), g_{i^{\dm}}(y)\}'_{i}$. By Remark \ref{r:Trd}, this is equivalent to \eqref{gi pairing}.
\end{proof}

After fixing the admissible pairings between the simple $A_{\b}$-modules, we are going to choose a family of $A_{\b}$-module isomorphisms $\y_{e}:S_{i}^{\xi}\to S_{\ov\xi(i)}$  for each arrow $e:i\to \ov\xi(i)$ of $Q_{\xi}$.

Let $e:i\to \ov\xi(i)$ be an arrow in $Q_{\xi}$ such that $e=e^{\dm}$, i.e., $\ov\xi(i)=i^{\dm}$ (the case $i=i^{\dm}$ is allowed). Note that in this case $\s_{c\xi^{-1}}: \bb\mapsto \Nm_{F/k}(c\xi^{-1})\bb^{-1}$ is an automorphism of $L_{i}$.

An isomorphism of $A_{\b}$-modules $\y_{e}: S_{i}^{\xi}\isom S_{\ov\xi(i)}=S_{i^{\dm}}$ is called {\em self-adjoint} if $\j{u, \y_{e}(v)}_{i}=\s\j{v,\y_{e}(u)}_{i}$ for all $u,v\in S_{i}=S_{i}^{\xi}$; $\y_{e}$ is called {\em skew-self-adjoint} if $\j{u, \y_{e}(v)}_{i}=-\s\j{v,\y_{e}(u)}_{i}$ for all $u,v\in S_{i}=S_{i}^{\xi}$.

\begin{lemma}\label{l:ye}
Let $e:i\to \ov\xi(i)$ be an arrow in $Q_{\xi}$ such that $e=e^{\dm}$. Then one of the following happens.
\begin{enumerate}
\item There exists a self-adjoint $A_{\b}$-linear isomorphism $\y_{e}: S_{i}^{\xi}\isom S_{\ov\xi(i)}=S_{i^{\dm}}$.
\item All elements in $\Hom_{A_{\b}}(S_{i}^{\xi}, S_{i^{\dm}})$ are skew-self-adjoint. This can only happen when $D_{i}=L_{i}$, $\s_{c\xi^{-1}}|_{L_{i}}=\id$ and $\s\ne\id_{F}$ (in particular, $\s=\z^{n/2}$ and $b_{i}^{2}=\Nm_{F/k}(c\xi^{-1})$).
\end{enumerate}

%\begin{enumerate}
%\item If $D_{i}=L_{i}$ and $\s_{c\xi^{-1}}|_{L_{i}}=\id$ (equivalently $\s_{k}=\id$, $D_{i}=L_{i}$ and  $b_{i}^{2}=\Nm_{F/k}(c\xi^{-1})$), then either all elements in $\Hom_{A_{\b}}(S_{i}^{\xi},S_{\ov\xi(i)})$ are self-adjoint or all are skew-self-adjoint . We call $e$ of {\em symmetric type} in the former case and of {\em skew-symmetric type} in the latter case.
%\item If we are not in the above situation, then there exist both self-adjoint and skew-self-adjoint $A_{\b}$-module isomorphisms $S_{i}^{\xi}\isom S_{\ov\xi(i)}$. Moreover, any element in $\Hom_{A_{\b}}(S_{i}^{\xi}, S_{\ov \xi(i)})$ is uniquely the sum of a self-adjoint map and a skew-self-adjoint map. We call $e$ of {\em Hermitian type} in this case.
%\end{enumerate}
\end{lemma}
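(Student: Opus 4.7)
The plan is to reduce the dichotomy for $\Hom_{A_\b}(S_i^\xi, S_{i^\dm})$ to the dichotomy for admissible pairings on $S_i$ already established in Lemma \ref{l:pairing}, with the constant $c$ replaced by the twisted constant $c':=c\xi^{-1}\in (F^\times)^\s$. Because $\xi\in\Xi_{m/n}\cap F^\s$, a short computation gives $\Nm_{F/k}(c')^{m/n}=\b\s(\b)$, so $c'$ plays the role of $c$ in the analog of \eqref{bc}.

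The translation is mediated by the fixed perfect admissible pairing $\j{\cc}_i$. For $\y\in\Hom_{A_\b}(S_i^\xi,S_{i^\dm})$ I set
\[
\jj{u,v}_\y:=\j{u,\y(v)}_i,\qquad u,v\in S_i.
\]
Since $\j{\cc}_i$ is perfect and $F$-linear in the first slot, $\y\mapsto\jj{\cc}_\y$ is a bijection onto the space of pairings $S_i\times S_i\to F$ that are $F$-linear in the first variable and $(F,\s)$-semilinear in the second. Unpacking $A_\b$-linearity of $\y:S_i^\xi\to S_{i^\dm}$: from $\bth\cdot_{S_i^\xi}v=\xi\bth v$ and the $F$-linearity of $\y$ one obtains $\y(\bth v)=\xi^{-1}\bth\y(v)$. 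Combined with property (2) of $\j{\cc}_i$ and $\s(\xi)=\xi$, this yields
\[
\jj{\bth u,\bth v}_\y = c'\,\z(\jj{u,v}_\y),
\]
so $\jj{\cc}_\y$ is an admissible pairing on $S_i$ in the sense of \S\ref{ss:pairing} but with $c$ replaced by $c'$. Under the bijection, $\y$ is self-adjoint iff $\jj{\cc}_\y$ is Hermitian, and skew-self-adjoint iff $\jj{\cc}_\y$ is skew-Hermitian.

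It then suffices to rerun the proof of Lemma \ref{l:pairing} verbatim with $c'$ in place of $c$. The symmetrization $\jj{u,v}_\y+\s\jj{v,u}_\y$ of a nonzero $\jj{\cc}_\y$ is either a nonzero Hermitian admissible pairing — in which case the corresponding $\y$ is self-adjoint and nonzero, hence an isomorphism of simple $A_\b$-modules, giving case (1) — or it vanishes identically for every such $\y$, forcing $D_i=L_i$, $\s_{c'}|_{L_i}=\id$ and $\s\ne\id_F$ exactly as in \emph{loc.\ cit.}\ (case (2)). Note that $\s_{c'}=\s_{c\xi^{-1}}$ as automorphisms of $L_\b$ (both send $\bb$ to $\Nm_{F/k}(c\xi^{-1})\bb^{-1}$), and the hypothesis $e=e^\dm$, i.e., $\ov\xi(i)=i^\dm$, is precisely the statement that $\s_{c\xi^{-1}}$ preserves $L_i$; the equation $b_i^2=\Nm_{F/k}(c\xi^{-1})$ follows from $\s_{c\xi^{-1}}(b_i)=b_i$. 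The main obstacle is the verification of the twisted admissibility identity above, the bookkeeping with the $\xi$-twist on the $\bth$-action; once this is in hand the remainder is a direct transfer of the previous lemma.
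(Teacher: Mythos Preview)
Your proof is correct and follows essentially the same approach as the paper: both symmetrize a given $\y$ (the paper via $\y\mapsto\y+\y^{*}$, you via the associated pairing $\jj{u,v}_{\y}=\j{u,\y(v)}_{i}$) and, if that always vanishes, deduce $D_{i}=L_{i}$, $\s_{c\xi^{-1}}|_{L_{i}}=\id$, and $\s\ne\id_{F}$. The only difference is packaging: you make the reduction to Lemma~\ref{l:pairing} (with $c$ replaced by $c'=c\xi^{-1}$) explicit via the bijection $\y\leftrightarrow\jj{\cc}_{\y}$, whereas the paper rewrites the same computations directly in terms of $\y$ and concludes commutativity from $\y_{e}^{\flat}=\d_{i}$ being simultaneously an isomorphism and an anti-isomorphism.
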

\begin{proof}
The argument is similar to that of Lemma \ref{l:pairing}. For any $A_{\b}$-linear map $\y: S_{i}^{\xi}\to S_{\ov\xi(i)}$, define $\y^{*}: S_{i}^{\xi}\to S_{\ov\xi(i)}$ by requiring $\j{u,\y(v)}_{i}=\s\j{v,\y^{*}(u)}_{i}$ for all $u,v\in S_{i}$. Then $\y^{*}$ is also  $A_{\b}$-linear, and $\y^{**}=\y$. If $\y+\y^{*}$ is nonzero, it gives a self-adjoint isomorphism.

Now suppose a self-adjoint $\y$ does not exist. This implies $\y+\y^{*}=0$ for all $\y\in \Hom_{A_{\b}}(S_{i}^{\xi}, S_{i^{\dm}})$, i.e., all $\y$ are skew-self-adjoint. Fix a skew-self-adjoint isomorphism  $\y_{e}:S_{i}^{\xi}\isom S_{i^{\dm}}$. For any $d\in D_{i}$, $u\mapsto \y_{e}(ud)$ again belongs to $\Hom_{A_{\b}}(S_{i}^{\xi}, S_{i^{\dm}})$, hence it is also skew-self-adjoint. Therefore,  for $u,v\in S_{i}$, $\j{u,\y_{e}(v)\y_{e}^{\flat}(d)}_{i}=\j{u,\y_{e}(vd)}_{i}=-\s\j{v,\y_{e}(ud)}_{i}=\j{ud,\y_{e}(v)}_{i}=\j{u,\y_{e}(v)\d_{i}(d)}_{i}$. Hence $\y_{e}^{\flat}=\d_{i}$ as maps $D_{i}\to D_{i^{\dm}}$. Since $\y_{e}^{\flat}$ is an algebra isomorphism while $\d_{i}$ is an anti-isomorphism, we conclude that $D_{i}$ is commutative hence $D_{i}=L_{i}$. Moreover, $\mu_{\xi}|_{L_{i}}=\y_{e}^{\flat}|_{L_{i}}=\d_{i}|_{L_{i}}=\s_{c}|_{L_{i}}$, which implies  $\s_{c\xi^{-1}}|_{L_{i}}=\id$. Finally, to rule out the case $\s=\id_{F}$, we use the same argument as in  Lemma \ref{l:pairing}. By skew-self-adjointness we have $\j{au\ell,\y_{e}(u)}_{i}=0$ for all $a\in F,\ell\in L_{i}$ and $u\in S_{i}$; choosing $u$ to be a generator of the rank one $F\ot_{k} L_{i}$-module $S_{i}$ we get $\j{S_{i},\y_{e}(u)}_{i}=0$ which is a contradiction.
\end{proof}

\subsection{Choice of the isomorphisms $\y_{e}$}
Next, for each arrow $e:i\to i^{\dm}$ in $Q_{\xi}$, we fix an isomorphisms of $A_{\b}$-modules
\begin{equation*}
\y_{e}: S_{i}^{\xi}\isom S_{\ov\xi(i)}
\end{equation*}
as follows. If $e\ne e^{\dm}$ (say $e:i\to \ov\xi(i)$, $e^{\dm}: \ov\xi(i)^{\dm}\to i^{\dm}$), then we choose $\y_{e}$ and $\y_{e^{\dm}}$ so that
\begin{equation*}
\s\j{v,\y_{e}(u)}_{\ov\xi(i)^{\dm}}=\j{u,\y_{e^{\dm}}(v)}_{i}, \quad\forall u\in S_{i}, v\in S_{\ov\xi(i)^{\dm}}.
\end{equation*}
If $e=e^{\dm}$, we choose $\y_{e}$ to be self-adjoint if there exists one; otherwise  we choose $\y_{e}$ to be skew-self-adjoint. By our choice, for each arrow $e$ there is a sign $\e_{e}\in \{\pm1\}$ such that
\begin{equation}\label{adj y}
\s\j{v,\y_{e}(u)}_{\ov\xi(i)^{\dm}}=\e_{e}\j{u,\y_{e^{\dm}}(v)}_{i}, \quad\forall u\in S_{i}, v\in S_{\ov\xi(i)^{\dm}}.
\end{equation}
Moreover,  $\e_{e}=-1$ can only happen in the situation (2) of Lemma \ref{l:ye}.

\begin{lemma}\label{l:g xi}
Let $\ph\in \Hom_{A_{\b}}(V^{\xi},V)$ correspond to a family of maps $\ph_{e}\in \Hom_{D_{i}}(M_{i}, M_{\ov\xi(i)})$ for each arrow $e:i\to \ov\xi(i)$ in $Q_{\xi}$, see \eqref{GL g xi}. Then $\ph\in \frg(\xi)$ if and only if for each arrow $e:i\to \ov\xi(i)$,
\begin{equation}\label{phe xy}
\{y,\ph_{e}(x)\}_{\ov\xi(i)^{\dm}}+\e\e_{e}\s_{c\xi^{-1}}(\{x,\ph_{e^{\dm}}(y)\}_{i})=0, \quad\forall x\in M_{i}, y\in M_{\ov\xi(i)^{\dm}}.
\end{equation}
\end{lemma}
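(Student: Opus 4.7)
The plan is to expand the self-adjointness condition on pure tensors and progressively translate it into an identity first in a division algebra and then in its center via the reduced trace.

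Both pairings in the condition are $k^{\s}$-bilinear on $V\times V$, so it is enough to test it on pure tensors $u\ot x\in S_{i}\ot_{D_{i}}M_{i}$ and $v\ot y\in S_{j}\ot_{D_{j}}M_{j}$. Expanding $\ph$ by \eqref{phe} and evaluating each pairing via \eqref{xy}, both terms automatically vanish unless $j=\ov\xi(i)^{\dm}$ (in which case $e^{\dm}:\ov\xi(i)^{\dm}\to i^{\dm}$ is indeed an arrow of $Q_{\xi}$), and the condition on pure tensors reads
\[
\j{\y_{e}(u)\{\ph_{e}(x),y\}'_{\ov\xi(i)},v}_{\ov\xi(i)}+\j{u\{x,\ph_{e^{\dm}}(y)\}'_{i},\y_{e^{\dm}}(v)}_{i}=0.
\]

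Next I would bring both summands into the common pairing $\j{\cc}_{i}$: \eqref{delta pairing} moves the $D$-valued coefficients to the right slot, then \eqref{pairing comp} followed by \eqref{adj y} converts $\j{\y_{e}(u),\,\cdot\,}_{\ov\xi(i)}$ into a multiple (by $\e_{\ov\xi(i)}\e_{e}$, up to $\s$) of $\j{u,\y_{e^{\dm}}(\cdot)}_{i}$, and the $A_{\b}$-linearity of $\y_{e^{\dm}}$ extracts the right $D$-action through $\y^{\flat}_{e^{\dm}}$. Non-degeneracy of $\j{\cc}_{i}$, surjectivity of $\y_{e^{\dm}}$, and faithfulness of the right $D_{i^{\dm}}$-action then reduce the condition to a $D_{i^{\dm}}$-valued identity, which after one application of \eqref{sw xy} (absorbing $\e_{\ov\xi(i)}^{2}=1$) becomes
\[
\e\e_{e}\,\y^{\flat}_{e^{\dm}}\bigl(\{y,\ph_{e}(x)\}'_{\ov\xi(i)^{\dm}}\bigr)+\d_{i}\bigl(\{x,\ph_{e^{\dm}}(y)\}'_{i}\bigr)=0 \quad\text{in }D_{i^{\dm}}.
\]

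Finally I would apply $\Trd_{D_{i^{\dm}}/L_{i^{\dm}}}$. The trace is preserved by the isomorphism $\y^{\flat}_{e^{\dm}}$ and by the anti-isomorphism $\d_{i}$, and by Remark \ref{r:Trd} it converts $\{\cc\}'$ into $\{\cc\}$. The restriction of $\y^{\flat}_{e^{\dm}}$ to centers is the isomorphism $L_{\ov\xi(i)^{\dm}}\isom L_{i^{\dm}}$ induced by $\mu_{\xi^{-1}}$ on $L_{\b}$ (because $\y_{e^{\dm}}$ is $A_{\b}$-linear from the $\xi$-twisted simple module, so it intertwines $L_{\b}$-actions through $\mu_{\xi^{-1}}$), while $\d_{i}|_{L}=\s_{c}$. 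Composing the inverse of $\y^{\flat}_{e^{\dm}}|_{L}$ (induced by $\mu_{\xi}$) with $\s_{c}$, and using the identity $\mu_{\xi}\c\s_{c}=\s_{c\xi^{-1}}$ on $L_{\b}$, rewrites the $L_{i^{\dm}}$-valued equation as the claimed identity \eqref{phe xy} in $L_{\ov\xi(i)^{\dm}}$. Every step is reversible, which gives the equivalence.

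The main obstacle will be the identification in the last step: showing that $\y^{\flat}_{e^{\dm}}|_{L}$ is the $\mu_{\xi^{-1}}$-induced isomorphism $L_{\ov\xi(i)^{\dm}}\isom L_{i^{\dm}}$, and then verifying that the composition $(\y^{\flat}_{e^{\dm}}|_{L})^{-1}\c\s_{c}$ coincides with the restriction of $\s_{c\xi^{-1}}$ as a map $L_{i}\to L_{\ov\xi(i)^{\dm}}$. A secondary bookkeeping task is tracking the signs $\e,\e_{e},\e_{i},\e_{\ov\xi(i)}$ through the chain \eqref{pairing comp}, \eqref{adj y}, \eqref{sw xy}, using the identities $\e_{i}=\e_{i^{\dm}}$ and $\e_{\ov\xi(i)}^{2}=1$.
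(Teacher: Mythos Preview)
Your proposal is correct and follows essentially the same strategy as the paper: expand the skew-adjointness condition on pure tensors via \eqref{phe} and \eqref{xy}, use \eqref{delta pairing}, \eqref{pairing comp}, \eqref{adj y} and \eqref{sw xy} to reduce to a $D$-valued identity, then take reduced trace and identify the central maps with $\s_{c\xi^{-1}}$. The only difference is a symmetric choice of target pairing: the paper funnels both terms into $\j{\cc}_{\ov\xi(i)^{\dm}}$ to obtain an identity in $D_{\ov\xi(i)}$ involving $\y^{\flat}_{e}$ and $\d_{\ov\xi(i)}$, whereas you funnel both into $\j{\cc}_{i}$ to obtain the mirror identity in $D_{i^{\dm}}$ involving $\y^{\flat}_{e^{\dm}}$ and $\d_{i}$; your identification $\y^{\flat}_{e^{\dm}}|_{L}=\mu_{\xi^{-1}}$ and the relation $\mu_{\xi}\c\s_{c}=\s_{c\xi^{-1}}$ are exactly what is needed, and the sign bookkeeping (in particular $\e_{\ov\xi(i)}=\e_{\ov\xi(i)^{\dm}}$ so that $\e_{\ov\xi(i)}^{2}=1$) works out as you indicate.
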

\begin{proof} The map $\ph$ lies in $\frg(\xi)$ if and only if
\begin{equation}\label{pre ph}
\j{\ph(u\ot x),v\ot y}+\j{u\ot x,\ph(v\ot y)}=0, \forall i\in I, u\in S_{i}, x\in M_{i}, v\in S_{\ov\xi(i)^{\dm}}, y\in M_{\ov\xi(i)^{\dm}}.
\end{equation}
Let $e:i\to \ov\xi(i)$ so that $e^{\dm}: \ov\xi(i)^{\dm}\to i^{\dm}$. We have by \eqref{phe} and \eqref{xy}
\begin{equation}
\j{\ph(u\ot x), v\ot y}=\j{\y_{e}(u)\ot \ph_{e}(x), v\ot y}_{\ov\xi(i)}=\j{\y_{e}(u)\{\ph_{e}(x),y\}'_{\ov\xi(i)}, v}_{\ov\xi(i)}.
\end{equation}
By \eqref{pairing comp} the above is equal to $\e_{\ov\xi(i)}\j{v,\y_{e}(u)\{\ph_{e}(x),y\}'_{\ov\xi(i)}}_{\ov\xi(i)^{\dm}}$. Hence
\begin{equation}\label{phuxvy}
\j{\ph(u\ot x), v\ot y}=\e_{\ov\xi(i)}\j{v,\y_{e}(u)\{\ph_{e}(x),y\}'_{\ov\xi(i)}}_{\ov\xi(i)^{\dm}}.
\end{equation}
On the other hand, by \eqref{phe},  and \eqref{xy}
\begin{equation*}
\j{u\ot x, \ph(v\ot y)}=\j{u\ot x, \y_{e^{\dm}}(v)\ot \ph_{e^{\dm}}(y)}_{i}=\j{u\{ x,\ph_{e^{\dm}}(y)\}'_{i}, \y_{e^{\dm}}(v)}_{i}.
\end{equation*}
By \eqref{adj y} and the definition of $\y^{\flat}_{e}$, we have
\begin{equation*}
\j{u\{ x,\ph_{e^{\dm}}(y)\}'_{i}, \y_{e^{\dm}}(v)}_{i}=\e_{e}\s\j{v,\y_{e}(u\{x,\ph_{e^{\dm}}(y)\}'_{i})}_{\ov\xi(i)^{\dm}}=\e_{e}\s\j{v,\y_{e}(u)\y^{\flat}_{e}(\{x,\ph_{e^{\dm}}(y)\}'_{i})}_{\ov\xi(i)^{\dm}}.
\end{equation*}
Therefore
\begin{equation}\label{uxphvy}
\j{u\ot x, \ph(v\ot y)}=\e_{e}\j{v,\y_{e}(u)\y^{\flat}_{e}(\{x,\ph_{e^{\dm}}(y)\}'_{i})}_{\ov\xi(i)^{\dm}}.
\end{equation}
Plugging \eqref{phuxvy} and \eqref{uxphvy} into \eqref{pre ph}, we get
\begin{equation*}
\e_{\ov\xi(i)}\j{v,\y_{e}(u)\{\ph_{e}(x),y\}'_{\ov\xi(i)}}_{\ov\xi(i)^{\dm}}+\e_{e}\j{v,\y_{e}(u)\y^{\flat}_{e}(\{x,\ph_{e^{\dm}}(y)\}'_{i})}_{\ov\xi(i)^{\dm}}=0
\end{equation*}
for all $u\in S_{i}, v\in  S_{\ov\xi(i)^{\dm}}$, which is equivalent to
\begin{equation}\label{phe xy'}
\e_{\ov\xi(i)}\{\ph_{e}(x), y\}'_{\ov\xi(i)}+\e_{e}\y^{\flat}_{e}(\{x,\ph_{e^{\dm}}(y)\}'_{i})=0, \quad\forall x\in M_{i}, y\in M_{\ov\xi(i)^{\dm}}.
\end{equation}
By \eqref{sw xy} we have
\begin{equation}
\e_{\ov\xi(i)}\{\ph_{e}(x), y\}'_{\ov\xi(i)}=\e\d_{\ov\xi(i)}(\{y,\ph_{e}(x)\}'_{\ov\xi(i)^{\dm}}),
\end{equation}
hence \eqref{phe xy'} is equivalent to
\begin{equation}\label{phe xy''}
\e\d_{\ov\xi(i)}(\{y,\ph_{e}(x)\}'_{\ov\xi(i)^{\dm}})+\e_{e}\y^{\flat}_{e}(\{x,\ph_{e^{\dm}}(y)\}'_{i})=0.
\end{equation}
Taking reduced trace and using $\s_{c}\c\mu_{\xi}=\s_{c\xi^{-1}}: L_{i}\isom L_{\ov\xi(i)^{\dm}}$ we get \eqref{phe xy}, which is equivalent to \eqref{phe xy''} by Remark \ref{r:Trd}.
\end{proof}

The above lemma motivates the following definition.
\begin{defn} For an arrow $e:i\to i^{\dm}$ fixed by $(-)^{\dm}$, and a sign $\ep'\in \{\pm1\}$, we define $\frh^{\ep'}(M_{i},M_{i^{\dm}})$ to be the set of maps $\ph_{i}: M_{i}\to M_{i^{\dm}}$ such that
\begin{equation*}
\ph(dx)=\y_{e}^{\flat}(d)\ph(x),  \quad \{y,\ph(x)\}_{i}=\ep'\s_{c\xi^{-1}}(\{x,\ph(y)\}_{i}), \quad\forall d\in D_{i},x\in M_{i}, y\in M_{i^{\dm}}.
\end{equation*}
\end{defn}

%%%%%%%% checked up to here %%%%%%%%
\subsection{Shape of $Q_{\xi}$ with involution}\label{ss:shape}
Each connected component of $Q_{\xi}$ is a directed cycle. Let $\Pi$ be the set of connected components of $Q_{\xi}$. The involution $(-)^{\dm}$ induces an involution on $\Pi$. Let $\un \Pi$ be the set of orbits of $\Pi$ under $(-)^{\dm}$. For $\a\in \un \Pi$, let $Q^{\a}_{\xi}$ be the union of the components contained in $\a$. Note that $\#\a=1$ or $2$. We have a decomposition
\begin{equation*}
Q_{\xi}=\coprod_{\a\in \un J} Q^{\a}_{\xi}.
\end{equation*}
Corresponding to this decomposition, we have
\begin{equation*}
H=\prod_{\a\in \un\Pi}H^{\a}; \quad \frg(\xi)=\bigoplus_{\a\in \un\Pi}\frg(\xi)^{\a}
\end{equation*}
such that $H^{\a}$ acts on $\frg(\xi)^{\a}$.

The directed graph $Q^{\a}_{\xi}$ ($\a\in \un J$) with involution $(-)^{\dm}$ takes one of the follow shapes:
\begin{enumerate}
\item[(CC-$\ell$)] $Q_{\xi}^{\a}$ is a disjoint union of two direct cycles with $(-)^{\dm}$ mapping one to the other. We label the vertices by $1,\cdots, \ell,1^{\dm},\cdots, \ell^{\dm}$ as follows ($\ell\ge1$).
\begin{equation*}
\xymatrix{1\ar[r] & 2\ar[r] & \cdots  \ar[d]  & \ell^{\dm} \ar[r] & (\ell-1)^{\dm}\ar[r] & \cdots \ar[d] \\
\ell \ar[u]  & (\ell-1) \ar[l] & \cdots\ar[l] & 1^{\dm}\ar[u]  & 2^{\dm} \ar[l] & \cdots\ar[l]}
\end{equation*}
\item[(VV-$\ell$)] $Q^{\a}_{\xi}$ is a directed cycle with two distinct vertices and no arrow fixed by $(-)^{\dm}$. We label the vertices as follows so that $0$ and $\ell$ are fixed by $(-)^{\dm}$ ($\ell=0$ is allowed).
\begin{equation*}
\xymatrix{ & 1  \ar[r]  & \cdots \ar[r] & (\ell-1)\ar[dr]\\
0=0^{\dm} \ar[ur]  &&&& \ell=\ell^{\dm}\ar[dl]\\
& 1^{\dm}\ar[ul] &  \cdots  \ar[l] & (\ell-1)^{\dm}\ar[l] & }
\end{equation*}
\item[(VE-$\ell$)] $Q^{\a}_{\xi}$ is a directed cycle with exactly one vertex and one arrow fixed by $(-)^{\dm}$. We label the vertices as follows so that  $0=0^{\dm}$ and $e: \ell\to \ell^{\dm}$ is fixed by $(-)^{\dm}$ ($\ell=0$ allowed).
\begin{equation*}
\xymatrix{ & 1  \ar[r]  & \cdots \ar[r] & \ell\ar[dd]^{e}\\
0=0^{\dm} \ar[ur] \\
& 1^{\dm}\ar[ul] &  \cdots  \ar[l] & \ell^{\dm}\ar[l] & }
\end{equation*}
\item[(EE-$\ell$)] $Q^{\a}_{\xi}$ is a directed cycle with no vertex and exactly two arrows  fixed by $(-)^{\dm}$. We label the vertices as follows so that $e:1^{\dm}\to 1$ and $e': \ell\to \ell^{\dm}$ are fixed by $(-)^{\dm}$ ($\ell=1$ is allowed).
\begin{equation*}
\xymatrix{ 1  \ar[r]  & \cdots \ar[r] & \ell\ar[d]^{e'}\\
1^{\dm}\ar[u]_{e} &  \cdots  \ar[l] & \ell^{\dm}\ar[l] & }
\end{equation*}
\end{enumerate}
Our convention is such that in cases (CC-$\ell$), (VV-$\ell$), and (EE-$\ell$) the graph $Q_{\xi}^{\a}$ has $2\ell$ vertices, while in case (VE-$\ell$) it has $2\ell+1$ vertices.

%For each $Q^{\a}_{\xi}$, we start with the vertex $0$ or $1$, and make the identifications
%\begin{equation}
%(D_{0}\cong) D_{1}\cong \cdots \cong D_{\ell}
%\end{equation}
%using the isomorphisms $\y_{e}^{\flat}$ for the arrows connecting these vertices. We denote the identified division algebra by $D^{\a}$ whose center we denote by $L^{\a}$. Thus $(M_{0}, ) M_{1},\cdots, M_{\ell}$ become $D^{\a}$-modules,

\subsection{The contragredient action}\label{ss:contra} For $i\in I$ and $g\in \Aut_{D_{i}}(M_{i})$, we define $g^{*}\in \Aut_{D_{i^{\dm}}}(M_{i^{\dm}})$  so that
\begin{equation*}
\{gx,y\}_{i}=\{x,g^{*}y\}_{i},\forall x\in M_{i}, y\in M_{i^{\dm}}.
\end{equation*}
The assignment $g\mapsto g^{*,-1}$ defines an isomorphism of algebraic groups
\begin{equation*}
\GL_{D_{i}/k^{\s}}(M_{i})\cong \GL_{D_{i^{\dm}}/k^{\s}}(M_{i^{\dm}}).
\end{equation*}

For each $\a\in \un\Pi$, Lemma \ref{l:H} and Lemma \ref{l:g xi}  give a description of $H^{\a}$ and $\frg(\xi)^{\a}$ in each case classified in \S\ref{ss:shape}. We summarize our results so far in the following theorem.

\begin{theorem}\label{th:main} The isomorphism type of the directed graph $Q_{\xi}$ together with the involution $(-)^{\dm}$ on it depends only on $(k,\s|_{k}, \b, \Nm_{F/k}(c), \Nm_{F/k}(\xi))$.

For each $\a\in \un\Pi$, the pair $(H^{\a}, \frg(\xi)^{\a})$ is described as follows according to the shape of $Q_{\xi}^{\a}$.
\begin{enumerate}
\item If $Q_{\xi}^{\a}$ is of shape (CC-$\ell$), then
\begin{eqnarray*}
&&H^{\a}\cong \prod_{i=1}^{\ell} \GL_{D_{i}/k^{\s}}(M_{i}),\\
&&\frg(\xi)^{\a}\cong \oplus_{i=1}^{\ell} \Hom_{D_{i}}(M_{i}, M_{i+1}).
\end{eqnarray*}
Here $M_{\ell+1}=M_{1}$, and $M_{i+1}$ is viewed as a $D_{i}$-module by $\eta^{\flat}_{e}: D_{i}\cong D_{i+1}$ (where $e$ is the arrow $i\to i+1$). The factors $\GL_{D_{i}/k^{\s}}(M_{i})$ and $\GL_{D_{i+1}/k^{\s}}(M_{i+1})$ act on $\Hom_{D_{i}}(M_{i}, M_{i+1})$ by $(g_{i},g_{i+1})\cdot \ph=g_{i+1}\c \ph\c g_{i}^{-1}$.

\item If $Q_{\xi}^{\a}$ is of shape (VV-$\ell$), then
\begin{eqnarray*}
&&H^{\a}\cong \bAut_{D_{0}/k^{\s}}(M_{0},\{\cc\}_{0})\times \prod_{i=1}^{\ell-1} \GL_{D_{i}/k^{\s}}(M_{i})\times \bAut_{D_{\ell}/k^{\s}}(M_{\ell}, \{\cc\}_{\ell}),\\
&&\frg(\xi)^{\a}\cong \oplus_{i=0}^{\ell-1} \Hom_{D_{i}}(M_{i}, M_{i+1}).
\end{eqnarray*}
The action of $H^{\a}$ on $\frg(\xi)^{\a}$  is as explained in the case (CC-$\ell$), viewing $H^{\a}$ as a subgroup of $\prod_{i=0}^{\ell}\GL_{D_{i}/k^{\s}}(M_{i})$.

\item If $Q_{\xi}^{\a}$ is of shape (VE-$\ell$), then
\begin{eqnarray*}
&&H^{\a}\cong \bAut_{D_{0}/k^{\s}}(M_{0},\{\cc\}_{0})\times \prod_{i=1}^{\ell} \GL_{D_{i}/k^{\s}}(M_{i}),\\
&&\frg(\xi)^{\a}\cong (\oplus_{i=0}^{\ell-1} \Hom_{D_{i}}(M_{i}, M_{i+1}))\oplus \frh^{-\e\e_{e}}(M_{\ell},M_{\ell^{\dm}}).
\end{eqnarray*}
The action of $H^{\a}$ on $\Hom_{D_{i}}(M_{i}, M_{i+1})$ is as explained in the case (CC-$\ell$), viewing $\bAut_{D_{0}/k^{\s}}(M_{0},\{\cc\}_{0})$ as a subgroup of $\GL_{D_{0}/k^{\s}}(M_{0})$.
The action of $\GL_{D_{\ell}/k^{\s}}(M_{\ell})$ on $\frh^{-\e\e_{e}}(M_{\ell},M_{\ell^{\dm}})$ is induced from its natural action on $M_{\ell}$ and the contragredient action on $M_{\ell^{\dm}}$ given by $g\mapsto g^{*,-1}$ (see \S\ref{ss:contra}).

\item If $Q_{\xi}^{\a}$ is of shape (EE-$\ell$), then
\begin{eqnarray*}
&&H^{\a}\cong \prod_{i=1}^{\ell} \GL_{D_{i}/k^{\s}}(M_{i}),\\
&&\frg(\xi)^{\a}\cong \frh^{-\e\e_{e}}(M_{1^{\dm}}, M_{1})\oplus (\oplus_{i=1}^{\ell-1} \Hom_{D_{i}}(M_{i}, M_{i+1}))\oplus \frh^{-\e\e_{e'}}(M_{\ell},M_{\ell^{\dm}}).
\end{eqnarray*}
The action of $H^{\a}$ on $\Hom_{D_{i}}(M_{i}, M_{i+1})$ is as explained in the case (CC-$\ell$). The action of $\GL_{D_{\ell}/k^{\s}}(M_{1})$ on $\frh^{-\e\e_{e}}(M_{1^{\dm}}, M_{1})$ and the action of $\GL_{D_{\ell}/k^{\s}}(M_{\ell})$ on $\frh^{-\e\e_{e'}}(M_{\ell}, M_{\ell^{\dm}})$ are as explained in the (VE-$\ell$) case.
\end{enumerate}
\end{theorem}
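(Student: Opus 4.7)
The proof breaks into three stages: (i) verifying that the isomorphism type of $(Q_\xi,(-)^\dm)$ depends only on the stated invariants; (ii) decomposing $V$, $H$, and $\frg(\xi)$ along the orbits of $(-)^\dm$ on the connected components of $Q_\xi$; (iii) a case analysis for each of the four shapes listed in \S\ref{ss:shape}, applying Lemmas \ref{l:H} and \ref{l:g xi}.

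For stage (i), the algebra $L_\b = k[\bb]/(\bb^{m/n}-\b)$ and hence the vertex set $I = \Spec L_\b$ depend only on $(k,\b,m/n)$. The involution $\s_c$ restricts to $\s|_k$ on $k$ and sends $\bb\mapsto \Nm_{F/k}(c)\bb^{-1}$, so the induced involution $(-)^\dm$ on $I$ depends only on $(k,\s|_k,\b,\Nm_{F/k}(c))$. The arrow structure is given by $\ov\xi$, which comes from $\mu_\xi:\bb\mapsto \Nm_{F/k}(\xi)\bb$ and therefore adds exactly the dependence on $\Nm_{F/k}(\xi)$.

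For stage (ii), I would observe that the canonical decomposition \eqref{V decomp} is orthogonal with respect to $\j{\cc}$ modulo the involution: the pairing induces an $A_\b$-linear isomorphism $V\isom V^\dm$, and by comparison of isotypic components $S_i\ot_{D_i}M_i$ pairs non-trivially only with $S_{i^\dm}\ot_{D_{i^\dm}}M_{i^\dm}$. Grouping isotypic components by $(-)^\dm$-orbits on $\Pi$ therefore produces an orthogonal decomposition $V=\bigoplus_{\a\in\un\Pi}V^\a$ preserved by $\th$, yielding $H=\prod_\a H^\a$ and $\frg(\xi)=\bigoplus_\a \frg(\xi)^\a$.

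For stage (iii), the content is entirely encapsulated in two dichotomies applied to Lemma \ref{l:H} and Lemma \ref{l:g xi}. For a vertex: if $i\ne i^\dm$, then condition \eqref{gi pairing} forces $g_{i^\dm}=g_i^{*,-1}$ (contragredient action), so only $g_i$ is free, contributing a factor $\GL_{D_i/k^\s}(M_i)$; if $i=i^\dm$, then \eqref{gi pairing} reduces to preservation of $\{\cc\}_i$, contributing $\bAut_{D_i/k^\s}(M_i,\{\cc\}_i)$. For an arrow $e:i\to \ov\xi(i)$ of $Q_\xi$: if $e\ne e^\dm$, the identity \eqref{phe xy} of Lemma \ref{l:g xi} determines $\ph_{e^\dm}$ from $\ph_e$, leaving one free $\Hom_{D_i}(M_i,M_{\ov\xi(i)})$; if $e=e^\dm$ then \eqref{phe xy} becomes a $(\pm 1)$-self-adjointness condition, and unwinding the sign via \eqref{sw xy} and \eqref{adj y} shows that the resulting subspace is precisely $\frh^{-\e\e_e}(M_\ell,M_{\ell^\dm})$. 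Running these two dichotomies over each of the shapes (CC-$\ell$), (VV-$\ell$), (VE-$\ell$), (EE-$\ell$) yields the four cases of the theorem; the $H^\a$-action on $\frg(\xi)^\a$ is read off from \eqref{gi} and \eqref{phe}.

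I expect the main obstacle to be a purely bookkeeping one: correctly tracking the signs $\e$, $\e_i$, $\e_e$ so that the fixed-arrow contributions come out as $\frh^{-\e\e_e}$ rather than $\frh^{+\e\e_e}$. This requires carefully pairing the identity \eqref{phe xy'} with the involution identity \eqref{sw xy}, noting that the sign flip comes from the $\e\d_{\ov\xi(i)}(-)$ appearing when transporting $\{\ph_e(x),y\}'_{\ov\xi(i)}$ to the form $\{y,\ph_e(x)\}'_{\ov\xi(i)^\dm}$. Once the signs are handled, the rest is mechanical verification using the descriptions \eqref{GL H} and \eqref{GL g xi} restricted to each orbit $\a$.
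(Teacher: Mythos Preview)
Your proposal is correct and matches the paper's approach exactly: the paper presents Theorem~\ref{th:main} as a summary of the constructions and Lemmas~\ref{l:H} and~\ref{l:g xi} already established, and your three stages (dependence of $(Q_\xi,(-)^\dm)$ on the invariants, orthogonal decomposition along $\un\Pi$, and the vertex/arrow dichotomies) are precisely how those ingredients assemble into the statement. Your remark on the sign bookkeeping for the $\frh^{-\e\e_e}$ pieces is also on target; the paper handles this inside the proof of Lemma~\ref{l:g xi} via the passage \eqref{phe xy'}--\eqref{phe xy''}.
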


Here is a more precise description of the factors $\bAut_{D_{i}/k^{\s}}(M_{i}, \{\cc\}_{i})$ that appear in $H$ in the above theorem. The statement follows immediately from Corollary \ref{c:Lpairing}.

\begin{prop} Let $i=i^{\dm}$ be a vertex in $Q_{\xi}$. Then
\begin{enumerate}
\item If $\s=\id_{F}$ and  $\s_{c}|_{L_{i}}=\id$,  then $\bAut_{D_{i}/k^{\s}}(M_{i}, \{\cc\}_{i})$ is an orthogonal group (resp. symplectic group) when $\e=1$ (resp. $\e=-1$).
\item If $\s\ne\id_{F}$ and $\s_{c}|_{L_{i}}=\id$ (in particular, $\s|_{k}=\id$, hence $\s=\z^{n/2}$), then $\bAut_{D_{i}/k^{\s}}(M_{i}, \{\cc\}_{i})$ is either an orthogonal or a symplectic group.
\item If $\s_{c}|_{L_{i}}\ne\id$, then $\bAut_{D_{i}/k^{\s}}(M_{i}, \{\cc\}_{i})$ is a unitary group.
\end{enumerate}
\end{prop}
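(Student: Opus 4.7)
The plan is to read off each case directly from Corollary \ref{c:Lpairing}, which specialized to $i = i^{\dm}$ asserts that $\{\cc\}_i : M_i \times M_i \to L_i$ is $L_i$-linear in the first variable, $(L_i, \s_c)$-semilinear in the second, and satisfies
\begin{equation*}
\{y,x\}_i = \e\e_i\,\s_c(\{x,y\}_i), \quad\forall x,y \in M_i.
\end{equation*}
So the proof amounts to matching these three data (the ring of coefficients $L_i$ with its involution $\s_c|_{L_i}$, the sign $\e\e_i$, and the accompanying anti-involution $\d_i$ on $D_i$) against the definitions of the three flavors of classical group, case by case.

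For case (1), $\s = \id_F$ excludes situation (2) of Lemma \ref{l:pairing}, so $\e_i = 1$. Combined with $\s_c|_{L_i} = \id$, the displayed identity becomes $\{y,x\}_i = \e\{x,y\}_i$, an honest $L_i$-bilinear form which is symmetric or skew-symmetric according to $\e = \pm 1$. Taking Weil restriction from $L_i$ (and from $D_i$, which is split by a standard cyclic algebra argument in this setting) to $k^\s$ produces an orthogonal group when $\e=1$ and a symplectic group when $\e = -1$.

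For case (2), the hypothesis $\s_c|_{L_i}=\id$ forces the restriction of $\s$ to $k$ to be trivial, so $\s$ must equal $\z^{n/2}$. The form is again $L_i$-bilinear, and the identity reads $\{y,x\}_i = \e\e_i \{x,y\}_i$; now Lemma \ref{l:pairing}(2) does allow $\e_i = -1$, so the combined sign $\e\e_i$ can be either $+1$ or $-1$, and we obtain an orthogonal or a symplectic group accordingly. For case (3), $\s_c|_{L_i}$ is a nontrivial involution of $L_i$, so $\{\cc\}_i$ is (up to the overall sign $\e\e_i$) a Hermitian form with respect to $\s_c|_{L_i}$, and its automorphism group is by definition a unitary group over $k^\s$ (with sign $\e\e_i = -1$ reducing to $\e\e_i = +1$ after rescaling the form by any element of trace zero in $L_i$ over $L_i^{\s_c}$).

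There is really no obstacle of substance: once Corollary \ref{c:Lpairing} and Lemma \ref{l:pairing} are in hand, the three cases are a direct reading. The only point that requires a little care is the rescaling argument in case (3) that absorbs the sign $\e\e_i$ into the form, and the observation in case (1) that the underlying division algebra $D_i$ is automatically trivial so that the group genuinely is a classical orthogonal or symplectic group rather than a twisted form thereof; both are short verifications using the structure of $A_i$ as a cyclic algebra over $L_i$.
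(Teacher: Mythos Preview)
Your treatment of cases (2) and (3) is essentially the paper's: the anti-involution $g\mapsto g^{*}$ on $\End_{D_{i}}(M_{i})$ is of the first kind exactly when $\s_{c}|_{L_{i}}=\id$ and of the second kind otherwise, which is precisely what you are reading off from Corollary \ref{c:Lpairing}. The rescaling remark in (3) is fine.

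Case (1), however, has a real gap. Your argument that the group is orthogonal for $\e=1$ and symplectic for $\e=-1$ rests on the assertion that $D_{i}$ is split ``by a standard cyclic algebra argument''. This is not true in general, and no such argument exists under the stated hypotheses. For instance, take $k=\RR$, $F=\CC$, $n=m=2$, $\z$ complex conjugation, $\s=\id_{F}$, and $\b<0$ with $c\in\CC^{\times}$ chosen so that $|c|^{2}=\b^{2}$. Then $L_{\b}=\RR$, $\s_{c}|_{L_{\b}}=\id$, and $A_{\b}$ is the cyclic algebra $(\CC/\RR,\b)$, which is the Hamilton quaternions since $\b<0$; hence $D_{i}=\HH$. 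Once $D_{i}$ is a quaternion algebra, the involution $\d_{i}$ can be of symplectic type, and then an $\e$-Hermitian $D_{i}$-valued form yields a group of the \emph{opposite} type to what the sign $\e$ alone would suggest. So knowing that $\{\cc\}_{i}$ is $\e$-symmetric as an $L_{i}$-form is not enough; you must also control the type of $\d_{i}$ on $D_{i}$, and Corollary \ref{c:Lpairing} does not give you that.

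The paper sidesteps this by a different mechanism: having already established (via the first/second kind dichotomy) that the factor is orthogonal or symplectic, it invokes Proposition \ref{p:base change} to identify $H_{F}$ with $G^{\Ad(\th^{n})}$. Since $\th^{n}$ is an $F$-linear similitude of $(V,\j{\cc})$, $\Ad(\th^{n})$ is (essentially) inner on $G$, and it is a standard Lie-theoretic fact that the fixed subgroup of a semisimple inner automorphism of an orthogonal (resp.\ symplectic) group has simple factors only of type $A$ or of the same type as $G$. That forces $\bAut_{D_{i}/k^{\s}}(M_{i},\{\cc\}_{i})$ to match the type of $G$. You should either supply this base-change argument, or else give an honest proof that $\d_{i}$ is of orthogonal type in case (1); the latter is not a ``short verification''.
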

\begin{proof} The map $g\mapsto g^{*}$ defined in \S\ref{ss:contra} is an anti-involution on $\End_{D_{i}}(M_{i})$. When $\s_{c}|_{L_{i}}=\id$, it is an involution of the first kind; when $\s_{c}|_{L_{i}}\ne\id$, it is an involution of the second kind. Therefore in the former case the corresponding isometry group is an orthogonal or symplectic group, while in the latter case it is a unitary group.  This proves (2) and (3).

It remains to show in the case (1), the type of $\bAut_{D_{i}/k^{\s}}(M_{i}, \{\cc\}_{i})$ is the same as that of $G$.   We already know that $\bAut_{D_{i}/k^{\s}}(M_{i}, \{\cc\}_{i})$ is either an orthogonal or a symplectic group. By Prop \ref{p:base change}, $H_{F}$ is the fixed point subgroup of $G$ (orthogonal or symplectic) under $\Ad(\th^{n})$. Hence the simple factors of $H_{F}=G^{\Ad(\th^{n})}$ are either of type $A$ or of the same type as $G$. Therefore $\bAut_{D_{i}/k^{\s}}(M_{i}, \{\cc\}_{i})$ has the same type as $G$.
\end{proof}

\section{Loop Lie algebras of classical type}\label{s:loop}
In this section we continue with the setup in \S\ref{s:pol}. We specialize to the case $k=\CC\lr{\t}$. Then $F$ is a finite separable $k$-algebra with $\Aut_{k}(F)\cong\ZZ/n\ZZ$ but we do not require $F$ to be a field.  We write $\g=\Nm_{F/k}(c)\in k^{\times}$. Let $\val_{\t}:k^{\times}\to\ZZ$ be the valuation such that $\val_{\t}(\t)=1$.

According to Theorem \ref{th:main}, the isomorphism type of $(Q_{\xi}, (-)^{\dm})$ depends only on $(k,\s|_{k}, \b,\g, \Nm_{F/k}(\xi))$.  In the following we {\em assume $\Nm_{F/k}(\xi)\in \mu_{m/n}(\CC)$ to be primitive}. We describe in more details the shape of $(Q_{\xi},(-)^{\dm})$ as well as the factors in $H$ and $\frg(\xi)$. The situation simplifies because there are no nontrivial division algebras over $L_{i}$ in this case, therefore $D_{i}=L_{i}$ for all $i\in I$.

\subsection{The case $\s|_{k}=\id$} In this case $\g^{m/n}=\b^{2}$. We distinguish two cases according to the parity of $m/n$.

\sss{$m/n$ is odd} In this case, $\val_{\t}(\b)$ is divisible by $m/n$ hence $b^{m/n}=\b$ has $m/n$ distinct solutions in $k$, i.e., $L$ splits into $m/n$-factors of $k$ (all $L_{i}=k$). The graph $Q_{\xi}$ is a single cycle of length $m/n$. Since $m/n$ is odd, it must be of type (VE).

The unique vertex $i=i^{\dm}$ corresponds to the unique $b_{i}\in k$ such that $b_{i}^{2}=\g$ and $b_{i}^{m/n}=\b$. In particular, $\s_{c}|_{L_{i}}=\id$. The factor $\bAut_{k}(M_{i}, \{\cc\}_{i})$ in $H$ is either an orthogonal group or a symplectic group over $k$. When $\s|_{F}=\id$, we have $\e_{i}=1$ by Lemma \ref{l:pairing}, hence $\bAut_{k}(M_{i}, \{\cc\}_{i})$ is an orthogonal group if $\e=1$ and a symplectic group if $\e=-1$.

The unique arrow $e:j\to j^{\dm}$ fixed by $(-)^{\dm}$ corresponds to the unique $b_{j}\in k$ such that $b_{j}^{2}=\g\Nm_{F/k}(\xi^{-1})$ and $b_{j}^{m/n}=\b$. The factor $\frh^{-\e\e_{e}}(M_{j},M_{j^{\dm}})$ in $\frg(\xi)$ is isomorphic to either $\wedge^{2}(M_{j^{\dm}})$ or $\Sym^{2}(M_{j^{\dm}})$. When $\s|_{F}=\id$, we have $\e_{e}=1$ by Lemma \ref{l:ye}, hence $\frh^{-\e\e_{e}}(M_{j},M_{j^{\dm}})$ is $\wedge^{2}(M_{j^{\dm}})$ if $\e=1$ and $\Sym^{2}(M_{j^{\dm}})$ if $\e=-1$.

\sss{$m/n$ is even} In this case we have $\b=\pm\g^{m/2n}$. Whether or not  $b^{m/n}=\b$ has a solution in $k$ depends on the parity of $\val_{\t}(\g)$.
\begin{itemize}
\item When $\val_{\t}(\g)$ is even, $L$ splits into $m/n$ factors of $L_{i}=k$. The graph $Q_{\xi}$ is a single cycle of length $m/n$. We have two subcases:
\begin{enumerate}
\item When $\b=\g^{m/2n}$, then $Q_{\xi}$ is of type (VV). Let $i,i'\in I$ be the two vertices fixed by $(-)^{\dm}$. Since $\s_{c}|_{L_{i}}=\id$ and $\s_{c}|_{L_{i'}}=\id$, the factor of $H$ corresponding to $i$ or $i'$ is either an orthogonal groups or a symplectic groups (when $\s=\id_{F}$ it is the former if $\e=1$  and the latter if $\e=-1$).

\item When $\b=-\g^{m/2n}$, then $Q_{\xi}$ is of type (EE). The factor in $\frg(\xi)$ corresponding to any  arrow $e:i\to i^{\dm}$ fixed by $(-)^{\dm}$ is isomorphic to either $\wedge^{2}(M_{i^{\dm}})$ or $\Sym^{2}(M_{i^{\dm}})$ (when $\s=\id_{F}$ it is the former if $\e=1$ and the latter if $\e=-1$).
\end{enumerate}

\item When $\val_{\t}(\g)$ is odd. In this case $L$ splits into a product of fields $L_{i}$ where each $L_{i}$ is isomorphic to the unique quadratic extension of $k$. The graph $Q_{\xi}$ is a single cycle of length $m/2n$. We have four subcases:
\begin{enumerate}
\item When $m/2n$ is odd and $\b=\g^{m/2n}$, then $Q_{\xi}$ is of type (VE). The vertex $i=i^{\dm}$ corresponds to $b_{i}^{2}=\g$, and $\s_{c}|_{L_{i}}=\id$. The factor $\bAut_{L_{i}/k}(M_{i}, \{\cc\}_{i})$ is the Weil restriction of an orthogonal group or symplectic group over $L_{i}$ (when $\s=\id_{F}$ it is the former if $\e=1$ and the latter if $\e=-1$). The edge $e:j\to j^{\dm}$ fixed by $(-)^{\dm}$ corresponds to $b_{j}^{2}=-\g\Nm_{F/k}(\xi^{-1})$, hence $\s_{c\xi^{-1}}|_{L_{j}}\ne\id$. The corresponding factor $\frh^{-\e\e_{e}}(M_{j}, M_{j^{\dm}})$ is isomorphic to the space of $L_{j}/k$-Hermitian forms on $M_{j}$.

\item When $m/2n$ is odd and $\b=-\g^{m/2n}$, then $Q_{\xi}$ is of type (VE). The vertex $i=i^{\dm}$ corresponds to $b_{i}^{2}=-\g$, and $\s_{c}|_{L_{i}}\ne\id$. The factor $\bAut_{L_{i}/k}(M_{i}, \{\cc\}_{i})$ is a unitary group over $k$.  The edge $e:j\to j^{\dm}$ fixed by $(-)^{\dm}$ corresponds to $b_{j}^{2}=\g\Nm_{F/k}(\xi^{-1})$, hence $\s_{c\xi^{-1}}|_{L_{j}}=\id$. The corresponding factor $\frh^{-\e\e_{e}}(M_{j}, M_{j^{\dm}})$ is isomorphic to either $\wedge^{2}_{L_{j^{\dm}}}(M_{j^{\dm}})$ or $\Sym^{2}_{L_{j^{\dm}}}(M_{j^{\dm}})$ (when $\s=\id_{F}$ it is the former if $\e=1$ and the latter if $\e=-1$).

\item When $m/2n$ is even and $\b=\g^{m/2n}$, then $Q_{\xi}$ is of type (VV). One vertex $i=i^{\dm}$ corresponds to $b_{i}^{2}=\g$, and $\s_{c}|_{L_{i}}=\id$. The factor $\bAut_{L_{i}/k}(M_{i}, \{\cc\}_{i})$ is the Weil restriction of an orthogonal group or symplectic group over $L_{i}$ (when $\s=\id_{F}$ it is the former if $\e=1$ and the latter if $\e=-1$). Another vertex $i'=i'^{\dm}$ corresponds to $b_{i'}^{2}=-\g$, and  $\s_{c}|_{L_{i'}}\ne\id$. The factor $\bAut_{L_{i'}/k}(M_{i'},\{\cc\}_{i'})$ is a unitary group over $k$.

\item When $m/2n$ is even and $\b=-\g^{m/2n}$, then $Q_{\xi}$ is of type (EE). One arrow $e=e^{\dm}:i\to i^{\dm}$ corresponds to $b_{i}^{2}=\g\Nm_{F/k}(\xi^{-1})$, and $\s_{c\xi^{-1}}|_{L_{i}}=\id$. The factor $\frh^{-\e\e_{e}}(M_{i},M_{i^{\dm}})$ is isomorphic to $\wedge^{2}_{L_{i^{\dm}}}(M_{i^{\dm}})$ or $\Sym^{2}_{L_{i^{\dm}}}(M_{i^{\dm}})$ (when $\s=\id_{F}$ it is the former if $\e=1$ and the latter if $\e=-1$). Another arrow $e'=e'^{\dm}:i'\to i'^{\dm}$ corresponds to $b_{i}^{2}=-\g\Nm_{F/k}(\xi^{-1})$, and $\s_{c\xi^{-1}}|_{L_{i}}\ne\id$. The factor $\frh^{-\e\e_{e}}(M_{i'},M_{i'^{\dm}})$ is isomorphic to the space of $L_{i}/k$-Hermitian forms on $M_{i'}$.
\end{enumerate}
\end{itemize}

\subsection{The case $\s|_{k}\ne \id$}. We have $k^{\s}=\CC\lr{\t^{2}}$. Since $c\in F^{\s}$ hence $\g\in k^{\s}$, $\val_{\t}(\g)$ is always even. We have $\g^{m/n}=\b\s(\b)$, which implies that $\val_{\t}(\b)$ is divisible by $m/n$. Hence $L$ splits into $m/n$ factors of $k$. The graph $Q_{\xi}$ is a single cycle of length $m/n$. We distinguish two cases according to the parity of $m/n$.

\sss{$m/n$ is odd} In this case $Q_{\xi}$ is of type (VE).

The unique vertex $i=i^{\dm}$ corresponds to the unique $b_{i}\in k$ such that $b_{i}^{2}=\g$ and $b_{i}^{m/n}=\b$. Since $\s_{c}|_{L_{i}}\ne\id$, the factor $\bAut_{k}(M_{i}, \{\cc\}_{i})$ in $H$ is either an orthogonal group or a symplectic group over $k$.

The unique arrow $e:j\to j^{\dm}$ fixed by $(-)^{\dm}$ corresponds to the unique $b_{j}\in k$  such that $b_{j}^{2}=\g\Nm_{F/k}(\xi^{-1})$ and $b_{j}^{m/n}=\b$.  Since $\s_{c\xi^{-1}}|_{L_{j}}\ne\id$, the factor $\frh^{-\e\e_{e}}(M_{j},M_{j^{\dm}})$ in $\frg(\xi)$ is isomorphic to the space of $k/k^{\s}$-Hermitian forms on $M_{j}$.

\sss{$m/n$ is even} In this case $Q_{\xi}$ is of type (VV) or (EE) according to whether the equations
\begin{equation}\label{eqn b}
b\s(b)=\g, \quad b^{m/n}=\b
\end{equation}
have a common solution in $k^{\times}$.
\begin{itemize}
\item If the equations \eqref{eqn b} have a common solution in $k^{\times}$, then $Q_{\xi}$ is of type (VV). In this case, the two vertices $i,i'$ fixed by $(-)^{\dm}$ correspond to two solutions $b_{i}, b_{i'}=-b_{i}$ to \eqref{eqn b}. The corresponding factors in $H$ are unitary groups over $k^{\s}$.
\item If the equations \eqref{eqn b} do not have a common solution in $k^{\times}$, then $Q_{\xi}$ is of type (EE). In this case, the two arrows $e:i\to i^{\dm},e':i'\to i'^{\dm}$ fixed by $(-)^{\dm}$ correspond to two solutions $b_{i}, b_{i'}=-b_{i}$ to the system of equations
\begin{equation*}
b\s(b)=\g\Nm_{F/k}(\xi^{-1}), \quad b^{m/n}=\b.
\end{equation*}
The corresponding factors in $\frg(\xi)$ are isomorphic to the space of $k/k^{\s}$-Hermitian forms on $M_{i}$ and $M_{i'}$.
\end{itemize}

\end{document}